\documentclass{amsart}
\usepackage{amsthm}

\newtheorem{theorem}{Theorem}[section]
\newtheorem*{theorem_T_4.2_BG5}{Theorem B} 
\newtheorem*{theoremTensorProdDelta}{Theorem A}
\newtheorem*{theoremn_hol_sim_mod}{Theorem C}
\newtheorem{lemma}[theorem]{Lemma}

\newtheorem{proposition}[theorem]{Proposition}
\newtheorem{definition}[theorem]{Definition}

\newtheorem{example}[theorem]{Example}
\DeclareMathOperator{\rk}{rank}
\DeclareMathOperator{\gk}{GK-dim}
\DeclareMathOperator{\ann}{ann}

\DeclareMathOperator{\s}{\ast}

\DeclareMathOperator{\supp}{Supp}
\DeclareMathOperator{\im}{Image}
\DeclareMathOperator{\hm}{Hom}

\begin{document}
\title{Modules over quantum Laurent polynomials}

\author{Ashish Gupta}

\begin{abstract}   
It is shown that the Gelfand--Kirillov dimension 
for modules over quantum Laurent polynomials is additive 
with respect to tensor products over the base field. The 
Brookes--Groves invariant associated with a tensor product of modules 
is determined. Strongly holonomic modules are studied and
it is shown that there can be nonholonomic simple modules.   
\end{abstract}

\maketitle

\section{Introduction}
\label{introd}
Let $F$ be a field. 
Consider the associative $F$-algebra $P(\mathfrak q)$ generated by 
$u_1, \cdots, u_n$ and their inverses such that 
\begin{equation}
\label{def_rel_2}
u_iu_j = q_{ij}u_ju_i \ \ \ \forall i,j \in \{ 1,\cdots, n \},  
\end{equation}
where $q_{ij}$ are nonzero scalars in $F$ and $\mathfrak q = (q_{ij})$.
It is known by various names such as the
\emph{multiplicative analogue of the Weyl algebra},
the \emph{quantum Laurent polynomial algebra} and the \emph{quantum torus}.
It has the structure of a twisted group algebra $F \s A$ of a free abelian group $A$ of rank $n$ over $F$.   

In the special case $n = 2$, (\ref{def_rel_2}) becomes 
$u_1u_2 = q_{12}u_2u_1$, where $q_{12} \in F - \{0\}$.
This situation was first studied in \cite{Ja} and 
\cite{Lo} and it was shown that when $q_{12}$ is not a root of unity in $F$,
$P((q_{12}))$ shares certain curious properties with the first Weyl algebra $A_1(k)$ 
over a field $k$ of characteristic zero.
 
The case of arbitrary $n$ was first considered by J.~C.~McConnell and J.~J.~Pettit in \cite{MP}.
It was shown in \cite{MP} that if the subgroup of the multiplicative group of 
$F$ generated by the $q_{ij}$ has the maximal possible torsion-free rank then  
$P(\mathfrak q)$ is a simple noetherian hereditary domain.
 
The quantum Laurent polynomial algebras play a fundamental role in noncommutative geometry (see \cite{M}). 
They also arise in the representation theory of torsion-free nilpotent groups as suitable localizations (see \cite{Br2}).
   
In recent times there has been considerable interest in the theory of these algebras and their generalizations.
The ring-theoretic properties of these rings have been studied in \cite{AG}, \cite{A4} and \cite{MP}. 
In \cite{A1}, \cite{A2} and \cite{A3}, projective and simple modules over general quantum polynomial rings have  been considered by V.~A.~Artamonov.
In \cite{BG1} and \cite{BG2}, C.~J.~B.~Brookes and J.~R.~J.~Groves have introduced a geometric invariant for 
$F \s A$-modules modelled 
on the original Bieri--Strebel invariant (see 
\cite{BS}).     

The algebras $P(\mathfrak q)$ are precisely the twisted group algebras $F \s A$ of a free finitely generated abelian group $A$ over $F$.
In this paper we consider the structure of modules over the algebras $F \s A$.
We first review (Section 1.1) the basic properties of these algebras.   
We then give a brief exposition of the 
geometric invariant $\Delta(M)$ of Brookes and 
Groves associated with a finitely generated $F \s A$-module $M$.
Theorem A of Section 3 determines the
Brookes--Groves invariant associated with a tensor
product of modules. The Gelfand--Kirillov dimension (GK dimension)
of a tensor product of modules is also determined.

\begin{theoremTensorProdDelta}
Let $M_i$ be a finitely generated $F \ast
A_i$-module, where $i  = 1,2$. Then for the finitely
generated $F \s (A_1 \oplus A_2)$-module $M_1 \otimes_F M_2$,
\[ \Delta(M_1 \otimes_F M_2) = p_1^*\Delta(M_1) +
p_2^*\Delta(M_2),
\]
where for $i = 1, 2$, 
$p_i^* : A_i^* \rightarrow (A_1 \oplus A_2)^*$ is the injection induced by the projection $p_i : A_1 \oplus A_2 \rightarrow A_i$. Furthermore,
\[ \gk(M_1 \otimes_F M_2) = \gk(M_1) + \gk(M_2). \]
\end{theoremTensorProdDelta}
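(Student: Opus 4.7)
The plan is to reduce both assertions to a common structural observation: after replacing the cocycle defining $F \s (A_1 \oplus A_2)$ by a cohomologous one that is split on the cross part $A_1 \times A_2$, the subalgebras $F \s A_1$ and $F \s A_2$ commute inside $F \s (A_1 \oplus A_2)$ and jointly generate it. Consequently, $M_1 \otimes_F M_2$ acquires a natural $F \s (A_1 \oplus A_2)$-module structure via $(a_1 + a_2)(m_1 \otimes m_2) = (a_1 m_1) \otimes (a_2 m_2)$, and it is finitely generated on $X_1 \otimes X_2$ whenever finite sets $X_i$ generate $M_i$.

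For the $\Delta$-identity I would work directly from the support-theoretic definition of $\Delta$ reviewed in Section~2, taking $X_1 \otimes X_2$ as the reference generating set. Every element of $F \s (A_1 \oplus A_2) \cdot (X_1 \otimes X_2)$ has support in $A_1 \oplus A_2$ contained in a Minkowski sum $S_1 + S_2$, where $S_i \subseteq A_i$ is a support set arising from $F \s A_i \cdot X_i$; tracking this through the asymptotic definition should give the containment $\Delta(M_1 \otimes_F M_2) \subseteq p_1^*\Delta(M_1) + p_2^*\Delta(M_2)$. The reverse containment I would obtain by restriction of scalars: viewed as an $F \s A_i$-module, $M_1 \otimes_F M_2$ contains a copy of $M_i$ for every nonzero vector chosen in $M_{3-i}$, so each direction realised by $\Delta(M_i)$ lifts under $p_i^*$ and summing the two contributions recovers every point of $p_1^*\Delta(M_1) + p_2^*\Delta(M_2)$.

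For the GK-dimension statement, the upper bound $\gk(M_1 \otimes_F M_2) \leq \gk(M_1) + \gk(M_2)$ is standard, obtained from the product of two finite-dimensional generating filtrations. The lower bound again uses restriction: choosing finite-dimensional $F$-subspaces $V_i \subseteq M_i$ that realise growth rates arbitrarily close to $\gk(M_i)$, the subspace $V_1 \otimes_F V_2 \subseteq M_1 \otimes_F M_2$ realises product growth, so the two dimensions add. Alternatively, if Section~2 records the relation between $\dim \Delta(M)$ and $\gk(M)$ for $F \s A$-modules, additivity of $\gk$ follows directly from the $\Delta$-formula just established.

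The main obstacle, as I see it, is the reverse containment in the $\Delta$-formula: one must certify that every extremal direction of $\Delta(M_1 \otimes_F M_2)$ decomposes along the orthogonal pair $A_1^* \oplus A_2^*$ inside $(A_1 \oplus A_2)^*$, which requires a careful argument distinguishing the Minkowski sum $p_1^*\Delta(M_1) + p_2^*\Delta(M_2)$ from its linear span. The scalar factors introduced by the cross-cocycle, although trivial in cohomology, need to be tracked through the tensor product so that they do not disturb the asymptotic counts; I expect this to be a routine nuisance rather than a genuine difficulty.
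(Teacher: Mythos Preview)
Your argument for the inclusion $p_1^*\Delta(M_1) + p_2^*\Delta(M_2) \subseteq \Delta(M_1 \otimes_F M_2)$ has a genuine gap. Restriction of scalars, together with the observation that $M_1 \otimes_F M_2$ contains a copy of $M_i$ as an $F \s A_i$-module, yields at best $p_i^*\Delta(M_i) \subseteq \Delta(M_1 \otimes_F M_2)$ for each $i$ separately. From this you obtain only the \emph{union} $p_1^*\Delta(M_1) \cup p_2^*\Delta(M_2) \subseteq \Delta(M)$, not the Minkowski sum: $\Delta(M)$ is a finite union of convex polyhedral cones, not a single convex cone, and there is no reason a priori for it to be closed under addition of arbitrary pairs of its elements. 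The paper obtains this inclusion via the filtration characterisation of $\Delta$ from \cite[Proposition~3.1]{BG2}: given nontrivial $\phi_i$-filtrations $\{M_i^\mu\}$ of $M_i$, one sets $M_\lambda = \sum_{\mu+\nu=\lambda} M_1^\mu \otimes_F M_2^\nu$ and verifies directly that this is a nontrivial $(\phi_1 p_1 + \phi_2 p_2)$-filtration of the tensor product, thereby placing every point of the Minkowski sum in $\Delta(M)$.

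For the opposite inclusion your sketch (``support contained in a Minkowski sum'', an ``asymptotic definition'') is too vague to assess; the paper instead uses the annihilator criterion \cite[Proposition~3.1(v)]{BG2}: if $\psi = \psi_1 p_1 + \psi_2 p_2 \in \Delta(M)$ but $\psi_1 \notin \Delta(M_1)$, then each generator $y$ of $M_1$ has an annihilator $\alpha_y \in F\s A_1$ on whose support $\psi_1$ takes a unique minimum; the image $\alpha_y' = \alpha_y \otimes 1$ annihilates $y \otimes z$ and $\psi$ takes a unique minimum on $\supp(\alpha_y')$, contradicting $\psi \in \Delta(M)$. Note that your final paragraph flags ``the reverse containment'' as the main obstacle and then describes \emph{this} direction, whereas earlier you used that phrase for the Minkowski-sum inclusion; whichever label you prefer, it is the Minkowski-sum inclusion where your outlined argument actually fails. (The GK-dimension statement is fine: the paper takes your second alternative and reads it off from the $\Delta$-formula via \cite[Theorem~4.4]{BG2}. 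The cocycle discussion in your first paragraph is unnecessary, since the $F\s(A_1\oplus A_2)$-structure in the statement is \emph{by definition} $F\s A_1 \otimes_F F\s A_2$, so the cross part of the cocycle is already trivial.)
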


Section 4 is concerned with strongly holonomic modules. 
These are defined analogously to the holonomic $A_n(k)$-modules, where $A_n(k)$ denotes the $n$-th Weyl algebra over a field $k$ of characteristic zero. An $A_n$-module $N$ is called \emph{holonomic} if $\gk(N) = \frac{1}{2}\gk(A_n)$.
Holonomic $A_n$-modules form an important subclass of $A_n$-modules and possess some nice properties (see \cite{Bj}). For the algebras $F \s A$, $\gk(F \s A) = \rk(A)$ by \cite[Section 5.1]{MP}.   
We may thus call an $F \s A$-module $M$ holonomic if $\gk(M) = \frac{1}{2}\rk(A)$. Such modules are encountered in group theory (see \cite{BG5}) with the additional condition that $M$ is torsion-free as $F \s B$-module whenever
$B$ is a subgroup of $A$ with $F\s B$ commutative. The following theorem first shown in \cite{BG5} is given a new proof in Section \ref{s_h_m}.
   
\begin{theorem_T_4.2_BG5}
Suppose that an algebra $F \s A$ with center $F$ and $\rk(A) = 2m$ has a strongly holonomic module.
Then for a finite index subgroup $A'$ in $A$,
\[ F \s A' =  F \s B_1 \otimes_F  \cdots \otimes_F F \s B_m, \]
where each $B_i \cong \mathbb Z \oplus \mathbb Z$ and
$m = \frac{1}{2}\rk(A)$.
\end{theorem_T_4.2_BG5}

In \cite[Section 6]{MP}, the question whether an algebra $F \s A$ that is simple 
 can have simple modules with distinct GK dimensions was considered. 
It was shown that if  $F \s A$ has Krull (global) dimension one then each simple 
$F \s A$-module has GK dimension equal to $\rk(A) -1$.  
In fact if an algebra $F \s A$ has dimension $m$, 
where $1 \le m \le \rk(A)$, the work of Brookes in \cite{Br2} 
implies that the minimum possible GK dimension for a nonzero finitely generated $F \s A$-module is 
$\rk(A) - m$. 
The question then arises 
if the GK dimension of a simple $F \s A$-module always
equals this minimum as in the dimension one case. 
 
We show in Section 5 that this need not be true in general.
 
\begin{theoremn_hol_sim_mod}
\label{n_hol_sim_mod}
Suppose that $F \s A$
has center exactly $F$
and $A$ has
a subgroup $B$
with $A/B$ infinite cyclic such that
$F \s B$ is commutative.
Then $F \s A$ has a simple $F \s B$-torsion-free module $S$ with 
$\gk(S) = n - 1$.
\end{theoremn_hol_sim_mod}

\subsection{Basic properties}
\label{twst_grp_alg}
We will now discuss the basic properties of the algebra $P(\mathfrak q)$ and its modules.
It is easily seen that the monomials $u_1^{m_1} \cdots u_n^{m_n}$, where
$m_j \in \mathbb Z$, 
constitute an $F$-basis of $P(\mathfrak q)$. The monomial $u_1^{m_1} \cdots u_n^{m_n}$ is denoted by $\mathbf {u}^{\mathbf {m} }$, where $\mathbf {m} = (m_1, \cdots, m_n) \in \mathbb Z^n$. We denote the set of nonzero elements of $F$ by $F^*$. The facts in the next proposition were established in \cite[Section 1]{MP}. 

\begin{proposition}[McConnell and Pettit]
\label{el_prop}
The following properties hold for the algebra $P(\mathfrak q)$:

\begin{itemize}
\item[(i)] 
$\mathbf {u}^{\mathbf{m}}\mathbf {u}^{\mathbf {m'}} =
\prod_{j > i} q_{ji}^{m_jm_i'} \mathbf { u}^{\mathbf { m + m'}}$,

\item[(ii)]  $(\mathbf { u}^{\mathbf { m}})^{-1} =
\mu(\mathbf { m})\mathbf { u}^{- \mathbf { m}}$, where
$\mu(\mathbf { m}) = \prod_{j > i}q_{ji}^{m_jm_i}$,

\item[(iii)] if $\alpha \in P(\mathfrak q)$ then $\alpha$ is a unit if and only if
$\alpha = \lambda \mathbf { u}^{\mathbf { m}}$ for some nonzero $\lambda \in F$,

\item[(iv)]  the group-theoretic commutator
$[\mathbf { u}^{\mathbf { a}}, \mathbf { u}^{\mathbf { b}}] =
\mathbf { u}^{\mathbf { a}}\mathbf { u}^{\mathbf { b}}
(\mathbf { u}^{\mathbf { a}})^{-1}
(\mathbf { u}^{\mathbf { b}})^{-1}$
lies in $F^*$,

\item[(v)] the derived subgroup of the group of units of $P(\mathfrak q)$ coincides with the subgroup of $F^*$ generated by the $q_{ij}$, where $1 \le i, j \le n$,

\item[(vii)] $P(\mathfrak q)$ is simple if and only if it has center exactly $F$.
\end{itemize}
\end{proposition}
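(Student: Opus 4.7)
The strategy is to obtain (i), (ii), (iv), (v) from straightforward manipulation of the defining relations \eqref{def_rel_2}, to deduce (iii) from a standard grading argument, and to reserve the main work for (vii).

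Properties (i) and (ii) are pure bookkeeping. For (i), one moves each $u_i^{m'_i}$ leftward in $u_1^{m_1}\cdots u_n^{m_n}u_1^{m'_1}\cdots u_n^{m'_n}$ past $u_j^{m_j}$ for all $j>i$, invoking $u_j u_i = q_{ji} u_i u_j$ to accumulate the factor $q_{ji}^{m_j m'_i}$; a double induction on the exponents assembles the stated product. Setting $\mathbf m' = -\mathbf m$ in (i) gives $\mathbf u^{\mathbf m}\mathbf u^{-\mathbf m} = \mu(\mathbf m)^{-1}$, so (ii) is immediate. Property (iv) follows from (i) and (ii): the accumulated exponent is $\mathbf a + \mathbf b - \mathbf a - \mathbf b = \mathbf 0$, leaving only a scalar in $F^\ast$. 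For (v), since $F$ is central, $[\lambda\mathbf u^{\mathbf a},\mu\mathbf u^{\mathbf b}] = [\mathbf u^{\mathbf a},\mathbf u^{\mathbf b}]$; centrality of commutators (from (iv)) makes $\mathbf a \mapsto [\mathbf u^{\mathbf a},\mathbf u^{\mathbf b}]$ a homomorphism $\mathbb Z^n \to F^\ast$, and similarly in $\mathbf b$, so every commutator of units factors as a product of $[u_i,u_j]=q_{ij}$, giving the stated generators for the derived subgroup.

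For (iii), the "if" direction is (ii). For "only if," fix the lexicographic order on $\mathbb Z^n$, a total order respecting addition. Writing $\alpha = \sum a_{\mathbf m}\mathbf u^{\mathbf m}$, $\beta = \sum b_{\mathbf n}\mathbf u^{\mathbf n}$ with $\alpha\beta = 1$, (i) says the coefficient of $\mathbf u^{\mathbf m+\mathbf n}$ in $\alpha\beta$ is $a_{\mathbf m}b_{\mathbf n}$ times a nonzero scalar. Let $\mathbf m_0 = \max\operatorname{supp}(\alpha)$ and $\mathbf n_0 = \max\operatorname{supp}(\beta)$; then $\mathbf m_0 + \mathbf n_0$ arises uniquely from the pair $(\mathbf m_0,\mathbf n_0)$, so its coefficient in $\alpha\beta$ is nonzero and must be $\mathbf 0$. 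The analogous argument at the minima yields $\min\operatorname{supp}(\alpha)+\min\operatorname{supp}(\beta) = \mathbf 0$, and then
\[ \max\operatorname{supp}(\beta) = -\max\operatorname{supp}(\alpha) \le -\min\operatorname{supp}(\alpha) = \min\operatorname{supp}(\beta) \]
forces both supports to be singletons.

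Property (vii) is the main obstacle. A direct calculation yields $u_i\mathbf u^{\mathbf m} = \bigl(\prod_j q_{ij}^{m_j}\bigr)\mathbf u^{\mathbf m}u_i$, so $z = \sum a_{\mathbf m}\mathbf u^{\mathbf m}$ is central iff $a_{\mathbf m}=0$ whenever $\mathbf m \notin L$, where $L = \{\mathbf m \in \mathbb Z^n : \prod_j q_{ij}^{m_j} = 1 \text{ for every } i\}$. Thus "center $= F$" is equivalent to $L = 0$. If $L \ne 0$, pick $\mathbf m \in L \setminus \{\mathbf 0\}$; the central element $\mathbf u^{\mathbf m} - 1$ has two-term support, so by (iii) it is not a unit, and the two-sided ideal it generates is proper and nonzero. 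Conversely, assume $L = 0$ and let $I$ be a nonzero two-sided ideal; pick $x \in I \setminus \{0\}$ of minimal support size. If that size exceeds $1$, choose distinct $\mathbf m,\mathbf m' \in \operatorname{supp}(x)$. Since $\mathbf m - \mathbf m' \notin L$, some $\mathbf a$ satisfies $\chi_{\mathbf a}(\mathbf m) \ne \chi_{\mathbf a}(\mathbf m')$, where $\chi_{\mathbf a}(\mathbf n)$ is the scalar in $\mathbf u^{\mathbf a}\mathbf u^{\mathbf n}\mathbf u^{-\mathbf a} = \chi_{\mathbf a}(\mathbf n)\mathbf u^{\mathbf n}$. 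Then $\chi_{\mathbf a}(\mathbf m)^{-1}\mathbf u^{\mathbf a}x\mathbf u^{-\mathbf a} - x \in I$ is nonzero (its $\mathbf m'$-coefficient survives) but has strictly smaller support (its $\mathbf m$-coefficient vanishes), contradicting minimality. Hence $x$ is a scalar multiple of a single monomial, which is a unit by (iii), so $I = P(\mathfrak q)$.
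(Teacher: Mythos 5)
Your proof is correct. Note that the paper itself offers no argument for this proposition: it simply cites McConnell and Pettit \cite[Section 1]{MP}, so there is nothing internal to compare against; what you have written is essentially the standard verification that appears in that source. Your individual steps all check out: (i) and (ii) by rewriting with the defining relations, (iii) by the leading/trailing term argument with respect to a total order on $\mathbb Z^n$ compatible with addition, (iv) and (v) using centrality of the commutators and the bimultiplicative identities (the paper records these as (4)--(5)), and (vii) via the characters $\chi_{\mathbf a}$: the kernel subgroup $L$ detects the center, a central non-unit such as $\mathbf u^{\mathbf m}-1$ generates a proper nonzero ideal when $L\ne 0$ (a central element with a one-sided inverse is a unit, so $cP\ne P$), and when $L=0$ the minimal-support element of a nonzero ideal is forced to be a single monomial, hence a unit. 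One cosmetic remark: your item (v) invokes the description of the unit group from (iii), which you prove afterwards; since (iii) does not depend on (v) there is no circularity, but reordering would make the dependence cleaner.
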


An associative $F$-algebra $\mathcal A$ is
a twisted group algebra $F \s A$ of a finitely generated
free abelian group $A$ over the field $F$ if 
\begin{itemize}
\item[(i)] there is an injective function
$\bar{\ }: A \rightarrow \mathcal A$,
$a \mapsto \bar a$, such that
$\overline A: = \im(\bar{\ })$ is a basis of $\mathcal A$ as an $F$-space,
\item[(ii)] the multiplication in $\mathcal A$ satisfies
\begin{equation}
\label{tw}
 \bar {a}_1 \bar {a}_2 = \tau(a_1, a_2)\overline {a_1a_2} \ \ \ \forall a_1, a_2 \in A,  
\end{equation}
where $\tau: A \times A \rightarrow F^*$ 
is a function satisfying: 
\begin{equation}
\label{cc}
 \tau(a_1, a_2)\tau(a_1a_2, a_3) = \tau(a_2, a_3)\tau(a_1, a_2a_3) \ \ \ \forall a_1, a_2, a_3 \in A. 
\end{equation}
\end{itemize}

Let $A$ be a free abelian group with basis $\{a_1, \cdots, a_n\}$.
Then there is an injection $A \rightarrow P(\mathfrak q)$ defined by  
$\prod a_i^{m_i} \mapsto \prod u_1^{m_i}$, where $m_i \in \mathbb Z$ and $i = 1, \cdots, n$. 
Condition (ii) above easily follows from (\ref{def_rel_2}). Finally, the associativity of 
$P(\mathfrak q)$ implies (\ref{cc}). 
Hence $P(\mathfrak q)$ is a twisted group algebra $F \s A$.

We note that in an algebra $F \s A$, the 
scalars are central so that 
\[ \lambda \bar a = \bar a \lambda \ \ \ \forall \lambda \in F, a \in A.\]   
In a \emph{crossed product}
 $D \s A$ (see \cite[Chapter 1]{Pa2}), where $D$ is a division ring, the multiplication is defined as in (ii) above but an element $d \in D$ need not be central. In fact for all $a \in A$ and $d \in D$,
\[ \bar ad = \sigma_a(d)\bar a,    \] 
for an automorphism $\sigma_a$ of $D$. 

Given an algebra $F \s A$, we may express $\alpha \in F \s A$ 
uniquely as
$\alpha  = \sum_{a \in A}\kappa_a \bar a$, where $\kappa_a \in F$.
The subset
$\supp(\alpha) := \{a \in A \mid \kappa_a \ne 0 \}$
of $A$ is finite and is called
the 
\emph{support of $\alpha$ in $A$}.
   For a subgroup $B$ of $A$ the subalgebra $\{\beta \in F \s A \mid \supp(\beta) \subseteq B \}$ of $F \s A$ is a twisted group algebra $F \s B$ of $B$ over $F$.

It is known (see, for example, \cite[Lemma 37.8]{Pa2}) that if $B$ is subgroup of $A$ then
$S_B = F \s B \setminus \{0\}$ is an Ore subset in $F \s A$.
As a consequence the subset 
\[ T_{S_B}(M) := \{ x \in M \mid x.s = 0 \  \mbox{for some} \ s \in S_B \} \] 
of $M$ is an $F \s A$-submodule of 
$M$. 
We say that $M$ is 
$S_B$-torsion (or $F \s B$-torsion) if $T_{S_B}(M) = M$ and $S_B$-torsion-free ($F \s B$-torsion-free)
if $T_{S_B}(M) = 0$. We note that the right Ore localization $(F \s A)S_B^{-1}$ is a crossed product    
$D_B \s A/B$, where $D_B$ stands for the quotient division ring of $F \s B$. We shall also write $(F \s A)(F \s B)^{-1}$ for $(F \s A)S_B^{-1}$.  

Note that if $a \in A$ then $\bar a$ is a unit of $F \s A$. Without loss, we may assume that $\bar 1$ is the identity of $F \s A$.
It easily follows from (\ref{tw}) that for $a_1, a_2 \in A$, the group-theoretic commutator 
$[\overline{ a_1}, \overline {a_2}] = \bar{a}_1\bar{a}_2\bar{a}_1^{-1}\bar{a}_2^{-1} \in F$. 
Then the following equalities hold (see \cite[Section 5.1.5]{Ro}):

\begin{align}
\label{c_m_r_1}
[\bar {a}_1 \bar {a}_2, \bar {a}_3] &= [\bar {a}_1, \bar {a}_3] [\bar {a}_2, \bar {a}_3],\\ 
\label{c_m_r_2}
[\bar {a}_1,  \bar {a}_2 \bar {a}_3] &= [\bar {a}_1, \bar {a}_2] [\bar {a}_1, \bar {a}_3]. 
\end{align}

For a subset $X \subseteq A$, we define $\overline X = \{ \bar x \mid x \in X \}$. 
Moreover, if $X_1, X_2 \subseteq A$, we define 
$[\overline{X_1}, \overline{X_2}] = \langle [\bar{x}_1, \bar{x}_2] \mid x_1 \in X_1, x_2 \in X_2  \rangle$. 
It is clear that $[\overline{X_1}, \overline{X_2}]$ is a subgroup of the multiplicative group $F^*$. 
 
\section{The Brookes--Groves geometric invariant}
\label{BGinv}

We shall now describe a geometric invariant which was introduced in \cite{BG1} and \cite{BG2}.
It is defined for finitely generated modules over a crossed product $D \s A$ of a finitely generated free abelian group $A$ over a division ring $D$. Since a twisted group algebra $F \s A$ is a special case of $D \s A$, the definitions and 
theorems that follow apply to $F \s A$-modules as well.   

Let $A$ be a 
finitely generated free 
abelian group 
and $A^* := \hm_{\mathbb Z}(A, \mathbb R)$.  Then $A^*$ is
an $\mathbb R$-space with
$\dim(A^*) = \rk(A)$, where $\rk(A)$ is the cardinality of a
basis of $A$.
For a basis $\mathfrak b = \{ b_i \mid i \in I \}$ of $A$
we recall that there is a basis $\mathfrak b^* =
\{ b^*_i \mid i \in I \}$ dual to $\mathfrak b$
and this allows the
construction of an isomorphism $\mathbb R^{| \mathfrak b |}
\rightarrow  A^*$. We may thus speak of characters $\phi \in A^*$ as points. 
There is a $\mathbb Z$-bilinear map $\langle - , - \rangle : A^* \times A \rightarrow \mathbb R$ defined by
\[  (\phi, c) \mapsto  \langle \phi, c \rangle = \phi(c) \ \ \  \forall \phi \in A^*, c \in A. \]
Whenever $B \le A$ is a subgroup, \[ \ann(B) :=
\{\phi \in A^* \mid \langle \phi, B \rangle = 0\} \] is a subspace of $A^*$
with \[ \dim(\ann(B)) = \rk(A) -  \rk(B). \]
For a subspace $V \le A^*$,  we define $\ann(V)$
analogously as \[ \ann(V) = \{ b \in A \mid \langle V, b \rangle
= 0\}. \]  It is not difficult to show that $\ann ( \ann(B) ) = B$.
For a point $\phi \in A^*$,  we define
\begin{align*}
A_{\phi, 0}  &= \{ a \in A \mid  \phi(a) \ge 0 \}, \\
A_{\phi, +}  &= \{ a \in A \mid  \phi(a) > 0 \}.
\end{align*}

Note that 
$A_{\phi, 0}$ is a submonoid and
$A_{\phi, +}$ a subsemigroup of $A$. 
In \cite[Proposition 3.1]{BG2} several equivalent
definitions of the geometric invariant are given which are analogous 
to the commutative case (see \cite{BS}).
The following definition was used in \cite{Br2}.

\begin{definition}[Brookes and Groves]
\label{def_BG_invnt}
Let $D$ be a division ring and $A$ be a free finitely
generated abelian group. Let $M$ be a finitely generated $D
\ast A$-module with a finite generating set $\mathcal X$.
Then $\Delta(M)$ is defined as the subset \[ \Delta(M) = \{
\theta \in A^* \mid  \mathcal XA_{\theta, 0} >
\mathcal XA_{\theta, +} \} \] of
$A^*$.
\end{definition}
The above definition seems to depend 
on the
choice of a generating set $\mathcal X$
for $M$ but 
the subset $\Delta(M)$ so
defined is actually independent
of such a choice (see 
\cite[Section 2]{Br2}). 

\begin{definition}[Definition 2.1 of \cite{BG3}]
Let $M$ be a finitely generated $D \s A$-module. 
For a point  $\phi \in A^*$, the \emph{trailing coefficient module} $TC_\phi(M)$ of $M$ at $\phi$ is defined as $TC_\phi(M)  = \mathcal X A_{\phi, 0}/ \mathcal X A_{\phi, +}$, where $\mathcal X$ is a (finite) generating set for $M$. 
\end{definition}
Note that $TC_\phi(M)$ is a finitely generated $D \s K$-module where $K = \ker \phi$. It is immediate from Definition \ref{def_BG_invnt} that $\phi \in \Delta(M)$ if and only if $TC_\phi(M) \ne 0$. In general, $TC_\phi(M)$ need not be independent of $\mathcal X$. A dimension for finitely generated $D \s A$-modules was introduced in \cite{BG2}. 

\begin{definition}[Definition 2.1 of \cite{BG2}]
\label{def_dim}
Let $M$ be a $D \s A$-module. The dimension $\dim (M)$ of
$M$
is defined to be
the greatest integer $r$, where $0 \le r \le \rk(A)$, so that
for some subgroup $B$ in
$A$ with rank $r$, 
$M$ is not $D \s B$-torsion.
\end{definition}

It was shown in \cite{BG2} that $\dim(M)$ coincides with
the Gelfand--Kirillov dimension (GK dimension) of $M$. We 
shall thus occasionally write 
$\gk(M)$ for $\dim(M)$. 
The following useful fact was also shown in 
\cite{BG2}.  
\begin{proposition}[Lemma 2.2 of \cite{BG2}]
\label{dim_is_exact}
Let \[
0 \rightarrow M_1 \rightarrow M \rightarrow M_2 \rightarrow 0 \]
be an exact sequence of $D \s A$-modules. Then
\[ \dim(M) = \sup\{\dim(M_1), \dim(M_2) \}. \]
\end{proposition}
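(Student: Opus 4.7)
The plan is to reduce the statement to the exactness of Ore localization at $S_B = D \s B \setminus \{0\}$, which is an Ore subset of $D \s A$ for every subgroup $B \le A$ (as recalled in Section 1.1). The crucial reformulation of Definition \ref{def_dim} is that a finitely generated $D \s A$-module $N$ is $S_B$-torsion if and only if the localization $NS_B^{-1}$ vanishes, so
\[ \dim(N) = \max\{ \rk(B) \mid B \le A, \ NS_B^{-1} \ne 0 \}. \]
Once this reformulation is in hand, the proposition becomes essentially automatic.

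For the inequality $\dim(M) \ge \sup\{\dim(M_1), \dim(M_2)\}$, I would argue directly with the torsion submodule rather than with localization. If $B \le A$ is a subgroup with $M_1$ not $S_B$-torsion, pick $x \in M_1$ with $xs \ne 0$ for every $s \in S_B$; viewing $M_1$ as a submodule of $M$, the same $x$ witnesses that $M$ is not $S_B$-torsion. If instead $M_2$ is not $S_B$-torsion, pick $y \in M_2$ with $ys \ne 0$ for all $s \in S_B$ and lift to $\tilde y \in M$; since the image of $\tilde y s$ in $M_2$ is $ys \ne 0$, we get $\tilde y s \ne 0$, again showing $M$ is not $S_B$-torsion. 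Taking the supremum over such $B$ yields the inequality.

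For the reverse inequality $\dim(M) \le \sup\{\dim(M_1), \dim(M_2)\}$, I would use exactness of the localization functor $-\otimes_{D \s A}(D \s A)S_B^{-1}$. Applied to the short exact sequence of the hypothesis, it produces an exact sequence
\[ 0 \to M_1 S_B^{-1} \to M S_B^{-1} \to M_2 S_B^{-1} \to 0. \]
If $B \le A$ has rank $r$ and $M$ is not $S_B$-torsion, then $M S_B^{-1} \ne 0$, so at least one of $M_i S_B^{-1}$ is nonzero, whence $M_i$ is not $S_B$-torsion and $\dim(M_i) \ge r$. Taking $r = \dim(M)$ gives the claim.

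There is no real obstacle here; the only point requiring a moment of care is the equivalence between being $S_B$-torsion and the vanishing of the Ore localization, which follows from the standard description of elements of $NS_B^{-1}$ as $x s^{-1}$ together with the fact that $xs^{-1} = 0$ iff $xt = 0$ for some $t \in S_B$. Everything else is formal from the definition of $\dim$ as the largest rank of a subgroup against which the module fails to be torsion.
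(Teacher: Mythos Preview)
The paper does not supply its own proof of this proposition: it is quoted as Lemma~2.2 of \cite{BG2} and left without argument. So there is nothing in the paper to compare against directly.

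Your proof is correct and is essentially the standard argument. The two halves are exactly right: for $\dim(M) \ge \sup\{\dim(M_1),\dim(M_2)\}$ you lift witnesses of non-torsion along the inclusion $M_1 \hookrightarrow M$ and the surjection $M \twoheadrightarrow M_2$; for the reverse inequality you use exactness of Ore localization at $S_B$ together with the equivalence ``$N$ is $S_B$-torsion $\Leftrightarrow NS_B^{-1}=0$''. Both steps are sound, and the equivalence you single out is indeed the only point that deserves a word of justification. One very small remark: in the first inequality you could, if you wished, dispense with the explicit lifting for $M_2$ and simply observe that the image of $T_{S_B}(M)$ in $M_2$ lies in $T_{S_B}(M_2)$, so if $M$ is $S_B$-torsion then so is $M_2$; but what you wrote is equally valid.
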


As already noted we may
identify $A^*$ with
$\mathbb R^n$ and $\Delta(M) \subseteq A^*$ is thus identified
with a subset of $\mathbb R^n$.
A subset $S$ of $\mathbb R^n$
is a \emph{polyhedron} when $S$ is a finite union of
\emph{convex polyhedra}. A \emph{convex polyhedron} is an
intersection of  finitely many closed half spaces in
$\mathbb R^n$. A polyhedron is \emph{rational} when each of
the boundaries
of the half spaces used to define it is rational, that is,
when it is generated by  rational linear combinations of the
chosen dual basis. For a convex polyhedron $\mathcal C$,
the
dimension of $\mathcal C$ is the dimension
of the subspace of
$\mathbb R^n$ spanned by $\mathcal C$.
The dimension of
a polyhedron is the greatest of the dimensions of its
constituent convex polyhedra.
In \cite[Theorem 4.4]{BG2}, it was shown that 
an ``essential" subset of  
$\Delta(M)$ is a polyhedron of dimension equal to the GK dimension of $M$.
It was shown in \cite{Wa1} that the Brookes--Groves 
invariant is polyhedral. 

\begin{theorem}[Theorem A of \cite{Wa1}]
\label{BG2_Theorem_4.4}
If $D \s A$ is a crossed 
product of a division ring 
$D$ by a free finitely 
generated  abelian
group $A$, then for all 
finitely generated $D \s A$-modules $M$, 
$\Delta(M)$ is a closed 
rational polyhedral 
cone in 
$\hm_{\mathbb Z}(A, \mathbb R )$.
\end{theorem}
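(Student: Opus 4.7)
The plan is to break the claim into three assertions: that $\Delta(M)$ is a cone, is closed, and is a finite union of closed rational convex polyhedral cones (in the sense of Section~\ref{BGinv}). The cone property is immediate from the definition, since $A_{\theta,0}$ and $A_{\theta,+}$ depend only on the ray $\mathbb{R}_{>0}\theta$, so $\Delta(M)$ is closed under positive scaling. For the polyhedral structure, I would present $M$ as $(D \s A)^k / N$ for some finitely generated left submodule $N$, which is possible since $D \s A$ is left noetherian.

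The heart of the argument is a Gröbner-basis / tropical fan apparatus for $D \s A$. The key reformulation is that $\theta \in \Delta(M)$ if and only if a suitable $\theta$-initial submodule $\operatorname{in}_\theta(N) \subseteq (D \s A)^k$, built from the $\theta$-lowest-weight parts of elements of $N$, fails to absorb the standard generators in the sense that $TC_\theta(M) \ne 0$. After fixing a term order refining the $\theta$-weight, one produces a finite Gröbner basis $\{f_1,\ldots,f_m\}$ of $N$ from noetherianity, and shows that $\operatorname{in}_\theta(N) = \sum_j (D \s A)\operatorname{in}_\theta(f_j)$. For each $j$, which elements of $\supp(f_j)$ realise $\min\{\theta(a) : a \in \supp(f_j)\}$ is determined by finitely many linear inequalities on $\theta$ with integer coefficients (read off from the coordinates of the support vectors in a dual basis). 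Hence the assignment $\theta \mapsto (\operatorname{in}_\theta(f_j))_{j=1}^m$ is constant on the relatively open cells of a rational polyhedral fan in $A^*$, the locus where $TC_\theta(M) \ne 0$ is a union of cones of this fan, and its closure is the required closed rational polyhedral cone.

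The principal obstacle is developing the noncommutative Gröbner theory over $D \s A$: one must check that the commutation scalars $\tau(a_1,a_2) \in F^*$ from $\bar{a}_1\bar{a}_2 = \tau(a_1,a_2)\overline{a_1 a_2}$ do not interfere with support-based reduction, and that Buchberger-style reduction via $S$-polynomials terminates. Fortunately, left multiplication by $\bar a$ only translates supports in $A$ and rescales coefficients in $D$, so the $\theta$-weight function is additive on monomials up to a shift, and the classical arguments adapt with essentially bookkeeping changes. A secondary subtlety is to verify that the resulting fan, and hence the resulting cone, is independent of the choice of presentation $M = (D \s A)^k/N$ and of Gröbner basis; this is the usual kind of argument passing to common refinements of fans, and it dovetails with the generating-set independence of $\Delta(M)$ already noted after Definition~\ref{def_BG_invnt}.
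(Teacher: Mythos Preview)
The paper does not prove this theorem at all: it is quoted verbatim as Theorem~A of \cite{Wa1} and used as background in Section~\ref{BGinv}, with no argument supplied. So there is no in-paper proof to compare against; the relevant benchmark is Wadsley's original argument in \cite{Wa1}.

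Your sketch is broadly in the spirit of Wadsley's approach, which also develops a Gr\"obner/initial-form machinery for $D \s A$ and extracts a fan decomposition of $A^*$ from it. Two points in your outline would need tightening before it becomes a proof. First, the Gr\"obner basis $\{f_1,\dots,f_m\}$ you produce is for a term order refining a \emph{fixed} $\theta$, so it depends on $\theta$; the subsequent claim that ``the assignment $\theta \mapsto (\operatorname{in}_\theta(f_j))_j$ is constant on the open cells of a rational polyhedral fan'' presupposes either a universal Gr\"obner basis valid for all weights, or a separate finiteness argument (\`a la Mora--Robbiano/Sturmfels) that only finitely many initial submodules $\operatorname{in}_\theta(N)$ occur as $\theta$ ranges over $A^*$. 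This is exactly where the real work lies, and your sketch elides it. Second, you conclude by taking the \emph{closure} of the locus $\{\theta : TC_\theta(M)\neq 0\}$, but the theorem asserts that $\Delta(M)$ is itself closed; one must argue that the union of fan cones you obtain is already closed, i.e.\ that passing to a face of a Gr\"obner cone can only shrink $\operatorname{in}_\theta(N)$ and hence cannot kill a nonzero $TC_\theta(M)$. Neither issue is fatal to the strategy, but both are genuine steps rather than bookkeeping.
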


The next section gives an application of the geometric
invariant to tensor products of $F \s A$-modules.

\section{The geometric invariant and
tensor products}
\label{nc_add_th}

Given twisted group algebras 
$F \s A_1$ and
$F \s A_2$, the tensor product
\[ F \s A_1 \otimes_F F \s A_2 \]
of $F$-algebras is a twisted 
group algebra of $A_1 \oplus A_2$ over $F$. 
Moreover, if $M_1$ and $M_2$
are modules over $F \s A_1$ and
$F \s A_2$ respectively then $M_1 \otimes_F M_2$ becomes
an $F \s A_1 \otimes_F F \s A_2$-module via
\[ (m_1 \otimes m_2)(\bar a_1, \bar a_2)
= m_1\bar a_1 \otimes m_2\bar a_2  \ \ \ \forall m_1,m_2 \in M, a_1,a_2 \in A \] 

We shall now determine 
the Brookes--Groves invariant 
associated with such a tensor 
product of modules. 

\begin{theoremTensorProdDelta}
Let $M_i$ be a finitely generated module over $F \ast
A_i$, where $i  = 1,2$. Then for the finitely
generated $F \s (A_1 \oplus A_2)$-module $M_1 \otimes_F M_2$,

\begin{equation} 
\label{tsr_prod_delta}
\Delta(M_1 \otimes_F M_2) = p_1^*\Delta(M_1) +
p_2^*\Delta(M_2),
\end{equation}
where $p_i^* : A_i^* \rightarrow (A_1 \oplus A_2)^*$ is the injection induced by the projection $p_i : A_1 \oplus A_2 \rightarrow A_i$ for $i \in \{1,2\}$. Furthermore,
\begin{equation}
\label{tsr_GK_min}
 \gk(M_1 \otimes_F M_2) = \gk(M_1) + \gk(M_2).
\end{equation}
\end{theoremTensorProdDelta}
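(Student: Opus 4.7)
The plan is to prove the $\Delta$-identity (3.1) directly and then deduce the GK dimension formula (3.2) from it via polyhedral dimensions. Under the canonical splitting $(A_1 \oplus A_2)^* = A_1^* \oplus A_2^*$ every $\phi$ decomposes uniquely as $p_1^*\phi_1 + p_2^*\phi_2$, so (3.1) reduces to the claim that $\phi \in \Delta(M_1 \otimes_F M_2)$ iff $\phi_i \in \Delta(M_i)$ for both $i$. Fix finite generating sets $\mathcal X_i \subseteq M_i$ and set $\mathcal X = \mathcal X_1 \otimes \mathcal X_2$; for each $r \in \mathbb R$ define the $F$-subspaces $N_i^{\ge r} = \mathrm{span}_F\{x \bar a : x \in \mathcal X_i,\ \phi_i(a) \ge r\}$ and $N_i^{>r}$ analogously, and likewise $L^{\ge s}, L^{>s}$ in $M_1 \otimes M_2$. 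A direct unraveling gives $L^{\ge s} = \sum_{r_1+r_2=s} N_1^{\ge r_1} \otimes_F N_2^{\ge r_2}$. The key technical ingredient is a shifting lemma: for $a_0 \in A_i$ with $\phi_i(a_0) = r$, right multiplication by $\bar a_0$ is an $F$-automorphism of $M_i$ carrying $N_i^{\ge 0}$ onto $N_i^{\ge r}$ and $N_i^{>0}$ onto $N_i^{>r}$, so $TC_{\phi_i}(M_i) = 0$ forces $N_i^{\ge r} = N_i^{>r}$ for every $r \in \mathbb R$.

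For the forward direction (contrapositive), if $\phi_1 \notin \Delta(M_1)$, the shifting lemma gives $N_1^{\ge r_1} = N_1^{>r_1}$ for all $r_1$; any pure-tensor generator of $N_1^{\ge r_1} \otimes N_2^{\ge r_2}$ with $r_1 + r_2 = 0$ is then expressible as $x_1 \bar a_1 \otimes x_2 \bar a_2$ with $\phi_1(a_1) > r_1$ and $\phi_2(a_2) \ge r_2$, hence $\phi(a_1,a_2) > 0$, so lies in $L^{>0}$. Summing over $(r_1, r_2)$ yields $L^{\ge 0} \subseteq L^{>0}$ and hence $\phi \notin \Delta(M_1 \otimes M_2)$. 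For the reverse direction, pick $m_i \in N_i^{\ge 0}$ with $[m_i] \ne 0$ in $TC_{\phi_i}(M_i)$, and (after subtracting an element of $N_i^{>0}$) assume each $m_i$ is an $F$-combination of $x\bar a$ with $\phi_i(a) = 0$ exclusively. I would then construct an $F$-linear map $\bar f \colon TC_\phi(M_1 \otimes_F M_2) \to TC_{\phi_1}(M_1) \otimes_F TC_{\phi_2}(M_2)$ realizing $[m_1 \otimes m_2] \mapsto [m_1] \otimes [m_2]$; the image is nonzero as a tensor of nonzero vectors over a field, forcing $[m_1 \otimes m_2] \ne 0$ in $TC_\phi(M_1 \otimes M_2)$ and hence $\phi \in \Delta(M_1 \otimes M_2)$.

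The GK formula (3.2) drops out quickly: by Theorem \ref{BG2_Theorem_4.4} together with the identification $\gk(M) = \dim_{\mathbb R} \Delta(M)$ from \cite{BG2}, it suffices to observe that $p_1^*(A_1^*)$ and $p_2^*(A_2^*)$ are complementary subspaces of $(A_1 \oplus A_2)^*$, so the Minkowski sum in (3.1) has dimension $\dim\Delta(M_1) + \dim\Delta(M_2)$. The main obstacle throughout is the construction of $\bar f$ in the reverse direction: because $L^{\ge 0}$ is only a (non-direct) sum of the subspaces $N_1^{\ge r_1} \otimes N_2^{\ge r_2}$ for varying $(r_1, r_2)$ with $r_1 + r_2 = 0$, one must verify that the natural assignment into $TC_{\phi_1} \otimes TC_{\phi_2}$ is consistent across overlaps and vanishes on $L^{>0}$. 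Both tasks reduce to the single observation that $[x_i \bar a_i] = 0$ in $TC_{\phi_i}$ whenever $\phi_i(a_i) > 0$, together with the fact that on each summand with $(r_1, r_2) \ne (0,0)$ one of the two coordinates is strictly positive, killing the corresponding class.
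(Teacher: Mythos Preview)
Your approach is correct and takes a genuinely different route from the paper's. The paper proves the two inclusions of (3.1) via two \emph{different} characterizations of $\Delta$: for $p_1^*\Delta(M_1)+p_2^*\Delta(M_2)\subseteq\Delta(M)$ it builds a $\phi$-filtration $M_\lambda=\sum_{\mu+\nu=\lambda}M_1^\mu\otimes_F M_2^\nu$ from given $\phi_i$-filtrations and verifies the conditions (C1)--(C4) of \cite[Section~3, Definition~4]{BG2}, with (C4) handled by a basis argument showing $u_1\otimes u_2\notin M_0$ when $u_i\notin M_i^0$; for the reverse inclusion it invokes the annihilator criterion \cite[Proposition~3.1(v)]{BG2} (if $\psi_1\notin\Delta(M_1)$, each generator of $M_1$ is killed by some $\alpha_y$ with a unique $\psi_1$-minimum on its support, and the same $\alpha_y$, viewed in $F\ast(A_1\oplus A_2)$, witnesses $\psi\notin\Delta(M)$). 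Your proof instead works uniformly with the trailing-coefficient-module description $\phi\in\Delta\Leftrightarrow TC_\phi\neq 0$: the contrapositive direction is a direct computation with the half-space subspaces $N_i^{\ge r}$, and the other direction is your map $\bar f$. The paper's annihilator argument is slicker for the $\subseteq$ inclusion, while your $TC$-based argument is more self-contained; for the $\supseteq$ inclusion the two arguments are essentially dual (the paper exhibits an element outside $M_0$, you exhibit a nonzero class inside $TC_\phi$). Both derive (3.2) the same way, via the polyhedral dimension identification in \cite{BG2}.

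One small point on your construction of $\bar f$: defining it ``summand by summand'' on $\sum_{r_1+r_2=0}N_1^{\ge r_1}\otimes N_2^{\ge r_2}$ is awkward because for $(r_1,r_2)$ with $r_2<0$ an element $x_2\bar a_2\in N_2^{\ge r_2}$ need not lie in $N_2^{\ge 0}$ and so has no class in $TC_{\phi_2}(M_2)$ at all; your phrase ``killing the corresponding class'' then needs interpretation. The clean fix is to define the map globally first: let $\pi_i:M_i\to M_i/N_i^{>0}$ be the quotient maps and set $g=\pi_1\otimes\pi_2:M_1\otimes_F M_2\to (M_1/N_1^{>0})\otimes_F(M_2/N_2^{>0})$. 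Then $g$ is automatically well-defined, it annihilates $L^{>0}$ (since $\phi_1(a_1)+\phi_2(a_2)>0$ forces one summand positive), and on $L^{\ge 0}$ its image lies in the subspace $TC_{\phi_1}(M_1)\otimes_F TC_{\phi_2}(M_2)$ because every summand with $(r_1,r_2)\neq(0,0)$ has one factor inside $N_i^{>0}$ and hence is killed by $\pi_i$. This gives your $\bar f$ without any consistency check across overlaps.
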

\begin{proof}
Let $M : = M_1
\otimes_F M_2$.
We shall first show that
\begin{equation}
\label{lsinrs}
p_1^*\Delta(M_1) + 
p_2^*\Delta(M_2) \subseteq \Delta(M).
\end{equation}

We shall utilize \cite[Section 3, Definition 4]{BG2} for the
$\Delta$-set of a module.
This is as follows: for a finitely generated $F \s
A$-module
$L$ and a point $\phi \in A^*$
a nontrivial $\phi$-filtration of $L$ is a family of
$F$-subspaces $L_\mu$ of $L$, where $\mu \in \mathbb R$,
such that:
\begin{enumerate}
\item[(C1)] $L_\nu \ge L_\mu$, whenever $\nu \le \mu$,
\item[(C2)] $\cup_{\mu \in \mathbb R} L_{\mu} = L$,
\item[(C3)]  $L_\mu \bar a = L_{\mu +
    \phi(a)}$ for any $a \in A$,
\item[(C4)] for each $\mu \in \mathbb R$,
the subspace $L_\mu$ is a proper subspace of $L$.
\end{enumerate}
Then $\Delta(M)$ is defined to be the set of all
$\phi \in A^*$
for which
there exists a nontrivial $\phi$-filtration together with the zero of $A^*$. This definition is equivalent to Definition \ref{def_BG_invnt} (see \cite[Proposition 3.1]{BG2}). 

Thus to show (\ref{lsinrs}) it suffices to show that for
$\phi_i \in \Delta(M_i)$ such that either $\phi_1$ or $\phi_2$ is nonzero,
$M$ has a nontrivial $\phi: = \phi_1p_1 +
\phi_2p_2$-filtration.
Suppose, for the moment, that $\phi_i \ne 0$ for $i = 1,2$.
Since $\phi_i \in \Delta(M_i)$, where $1 \le i \le 2$, there
exists a nontrivial $\phi_i$-filtration
$\{M_{i}^{\mu}\}_{\mu \in \mathbb R}$ of $M_i$. 
We now define a $\phi$-filtration on $M$ by setting 
\[ M_\lambda = \sum_{\mu + \nu =
\lambda} M_{1}^\mu \otimes_F M_{2}^\nu, \ \ \ \forall \lambda 
\in
\mathbb R \ \mbox{and} \ \forall (\mu, \nu) \in \{ \mathbb R^2 \mid \mu + \nu = \lambda \}  \]  
and verify the above conditions (C1) -- (C4) as follows:

\begin{itemize}
\item[(C1)]
If $\lambda_1 \le \lambda_2$ are real numbers and
$(\mu_2, \nu_2)$ is any real pair such that $\lambda_2
=
\mu_2 + \nu_2$, then we can find a real pair $(\mu_1,
\nu_1)$ such that $\lambda_1= \mu_1 + \nu_1$ and such
that $\mu_1 \le \mu_2$, $\nu_1 \le \nu_2$. But then
$M_1^{\mu_1} \ge M_1^{\mu_2}$ and $M_2^{\nu_1} \ge
M_2^{\nu_2}$, whence \[ M_1^{\mu_1} \otimes_F
M_2^{\nu_1} \ge M_1^{\mu_2} \otimes_F M_2^{\nu_2}, \]
which shows that $M_{\lambda_1} \ge M_{\lambda_2}$. Hence (C1) holds. 

\item[(C2)]
As the elements of $M$ may be expressed as finite sums
of the decomposable elements
$x_1 \otimes x_2$, where $x_i \in M_i$, hence to
see that $M = \bigcup_{\lambda \in \mathbb R}
M_\lambda$ it is sufficient to show that $x_1 \otimes
x_2 \in M_\lambda$ for some $\lambda \in \mathbb R$.
But the filtrations $\{M_1^\mu\}$ and
$\{M_2^\nu\}$ guarantee the existence of
real numbers $\mu$ and $\nu$
such that $x_1 \in M_1^\mu$ and $x_2 \in M_2^\nu$. But then $x_1 \otimes x_2 \in M_1^\mu
\otimes_F M_2^\nu \subseteq M_{\mu + \nu}$.

\item[(C3)] To show (C3) we note that
\begin{align*}
M_\lambda(\overline{a_1}, \overline{a_2}) &=
\Bigl ( \sum_{\mu + \nu
= \lambda} M_1^\mu \otimes_F M_2^\nu \Bigr
)(\overline{a_1}, \overline{a_2}) \\
&= \sum_{\mu + \nu = \lambda} M_1^\mu\overline{a_
1} \otimes_F
M_2^\nu\overline{a_2} \\ &= \sum_{\mu + \nu = \lambda}
M_1^{\mu + \phi_1(a_1)} \otimes_F M_2^{\nu +
\phi_2(a_2)} \\ &=
\sum_{\mu' + \nu' = \lambda +
\phi((a_1,a_2))} M_1^{\mu'} \otimes_F M_2^{\nu'} \\ &=
M_{\lambda + \phi((a_1, a_2))}.
\end{align*}

\item[(C4)] To show (C4) we suppose to the contrary that for some $\lambda
    \in \mathbb R$ we have $M_\lambda = M$. This is equivalent to asserting that $M_0 = M$. We shall show that this 
results in a contradiction.
By (C4) for the nontrivial $\phi_i$ filtration on $M_i$, where $i = 1, 2$, $M_i/M_i^0$ 
is a nonzero $F$-vector space. We fix an $F$-basis
$\mathcal B_i^0$ of $M_i^0$, and an $F$-basis $\mathcal
B_i$ of $M_i$ such that $\mathcal B_i^0 \subseteq
\mathcal
B_i$. Note that the inclusion $\mathcal B_i^0 \subseteq
\mathcal B_i$ must be strict since $M_i/M_i^0$ is
nonzero. Pick $u_i \in \mathcal B_i \setminus \mathcal
B_i^0$.
Now $\mathcal B_1 \otimes \mathcal B_2: = \{
v_1 \otimes v_2 \mid v_i \in B_i \}$ is an $F$-basis for
$M = M_1 \otimes_F M_2$ .
Moreover, the element $(u_1 \otimes u_2)$ of $\mathcal
B: =  \mathcal B_1 \otimes \mathcal B_2$ does not lie in the
subset \[ \mathcal B' : = (\mathcal B_1^0 \otimes
\mathcal
B_2) \cup (\mathcal B_1 \otimes \mathcal B_2^0), \] 
where \[ \mathcal B_1^0 \otimes
\mathcal 
B_2 := \{w 
\otimes v \mid
w \in \mathcal B_1^0, v \in \mathcal B_2\} \] and $\mathcal B_1 \otimes
\mathcal B_2^0$ is defined analogously. Since
$\mathcal B$ is a basis of $M$, $u_1 \otimes u_2$
is not contained in \[ M' : = M_1^0 \otimes_F M_2 + M_1
\otimes_F
M_2^0 \] 
which is the $F$-linear span of $\mathcal B'$, and
\emph{a fortiori}, $u_1 \otimes u_2$ is not in
\[ M_0 = \sum_{\mu \in \mathbb R} M_1^\mu \otimes M_2^{- \mu} = \sum_{\nu \ge 0} M_1^\nu \otimes M_2^{- \nu} + \sum_{\nu \le 0} M_1^\nu \otimes M_2^{- \nu}. \] 
Hence, $M_0 \ne M$.
\end{itemize}

We have thus exhibited a nontrivial $\phi$-filtration of $M$ and so $\phi \in \Delta(M)$. It follows that if 
$\phi \in \Delta(M_i) \setminus \{0\}$ then 
$\phi = \sum_{i = 1}^2 \phi_ip_i$ is in $\Delta(M)$.
The case when either $\phi_1$ or $\phi_2$ is zero is handled similarly. 

We now show the reverse inclusion of (\ref{lsinrs}).
Let $\psi \in \Delta(M)$.
For $i = 1,2$, we define $\psi_i \in A_i^*$ by
$\psi_i := \psi
e_i$,where
$e_i : A_i \rightarrow A_1 \oplus A_2$ is the
injection of the biproduct. We shall show that
$\psi_i \in \Delta(M_i)$. It then follows that
\[ \psi = \psi_1p_1 + \psi_2p_2 \in \sum_{i = 1}^2 \Delta(M_i)p_i. \]

Suppose that $\psi_1 \not \in \Delta(M_1)$.
Let $\mathcal X_1$
be a finite generating set for $M_1$.
By \cite[Proposition 3.1(v)]{BG2}, for each
$y \in \mathcal X_1$ there is a nonzero $\alpha_{y} \in
\ann_{F \s A_1}(y)$, the annihilator of $y$ in 
$F \s A_1$, such that $\psi_1$ attains a unique
minimum on the support $\supp(\alpha_{y})$ of $\alpha_y$ in
$A_1$. Let $\mathcal X_2$ be a finite
$F \s A$-generating set for $M_2$.  Then 
\[ \mathcal X := \{ y \otimes z \mid y \in \mathcal X_1, z \in \mathcal X_2 \} \]
generates $M$ as an $F \s (A_1 \oplus A_2)$-module.
Denoting the image of $\alpha_y \in F \s A_1$ in
\[ F \s (A_1 \oplus A_2) = 
F \s A_1 \otimes_F F \s A_2 \]  
by $\alpha_y'$, we have
\[(y \otimes z)\alpha_y' = y\alpha_y \otimes z = 0
\otimes z = 0. \]
Furthermore, $\psi = \psi_1p_1 + \psi_2p_2$ has a
unique minimum on $\supp(\alpha_y')$. 
But then $\psi \not \in \Delta(M)$ by
 \cite[Proposition 3.1(v)]{BG2}.
This contradiction shows that
$\psi_1 \in \Delta(M_1)$.
Similarly it can be shown that
$\psi_2 \in \Delta(M_2)$. 
We have thus shown that (\ref{tsr_prod_delta}) holds.
Applying \cite[Theorem 4.4]{BG2} 
we obtain (\ref{tsr_GK_min}).
\end{proof}

\section{Strongly holonomic modules}
\label{s_h_m}
We shall now 
develop a proof of 
Theorem B.  
We shall first
prove some lemmas 
that are used in the 
proof. 
\subsection{Definitions and 
basic properties}
\begin{definition}[Definition 4.2 of \cite{BG5}]
\label{def_shol_module}
Let $M$ be a finitely generated $F \s A$-module where
$F \s A$ has center exactly $F$. Then $M$ is
\emph{strongly holonomic} if \[ \gk(M) = \frac{1}{2}\rk(A) \]
and for each commutative subalgebra $F \s C$, where $C \le A$, $M$
is torsion-free as $F \s C$-module.
\end{definition}

\begin{definition}
A nonzero $F \s A$-module $N$ is \emph{critical} if $N/L$ has GK dimension strictly smaller than that of $N$
for each $0 <  L < N$.

\end{definition}
The following proposition was
first shown in \cite{BG2}.
\begin{proposition}
\label{ecm}
Let $M$ be a finitely generated nonzero
$F \s A$-module.
Then $M$ contains a critical submodule.
\end{proposition}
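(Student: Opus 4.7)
The plan is to induct on $\gk(M)$, using the Noetherian property of $F \s A$ (cf.\ \cite{MP}) together with the exactness of the GK dimension established in Proposition \ref{dim_is_exact}.

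For the base case $\gk(M) = 0$, I would use the fact that a finitely generated $F \s A$-module of zero GK dimension is finite-dimensional over $F$ (polynomial growth of degree $0$ forces bounded dimension), and hence contains a simple $F \s A$-submodule. Any simple module is vacuously critical, since it has no proper nonzero submodules to test the condition against.

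For the inductive step with $\gk(M) = n > 0$, I would split into two cases. If $M$ has a nonzero submodule $L$ with $\gk(L) < n$, the induction hypothesis applied to $L$ yields a critical submodule, which is also a critical submodule of $M$. Otherwise every nonzero submodule of $M$ has GK dimension exactly $n$; I would then argue by contradiction. Supposing $M$ has no critical submodule, no nonzero submodule of $M$ has one either, so in particular every nonzero submodule of $M$ fails to be critical. Starting from $M$, not-critical gives $0 < L_1 < M$ with $\gk(M/L_1) = n$, whence by exactness and purity also $\gk(L_1) = n$. Applying the same reasoning to $L_1$ and iterating yields a strictly descending chain $M > L_1 > L_2 > \cdots$ with every $L_i$ and every successive quotient $L_{i-1}/L_i$ of GK dimension $n$.

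The main obstacle I expect is ruling out such an infinite descending chain, since the Noetherian hypothesis alone only provides ACC, not DCC. To close this gap I would invoke a positive integer-valued multiplicity (Bernstein number) $e(-)$ on finitely generated $F \s A$-modules of a given GK dimension $n$, which is additive on short exact sequences of modules of GK dimension $n$. Additivity gives $e(L_{i-1}) = e(L_i) + e(L_{i-1}/L_i) > e(L_i)$ along the chain, producing an infinite strictly descending sequence of positive integers, which is impossible. This contradiction shows that $M$ must contain a critical submodule.
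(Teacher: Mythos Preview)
Your proof is correct and follows essentially the same route as the paper: reduce to a pure submodule, build a strictly descending chain with top-dimensional successive quotients, and terminate the chain via a multiplicity bound (the paper cites \cite[Lemma~5.6]{MP} for precisely the Bernstein-number argument you sketch). The only difference is organizational---the paper bypasses your induction on $\gk(M)$ by directly selecting a nonzero submodule $N \le M$ of minimal GK dimension, which is automatically pure, and then runs the descending-chain argument inside $N$.
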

\begin{proof}
Amongst the nonzero submodules of $M$, choose one, $N$
say,
of minimal possible GK dimension. If $N$ is not critical
it
has a nonzero proper submodule $N_1$ with $\gk(N/N_1) =
\gk(N)$. By the minimality of $\gk(N)$, \[ \gk(N_1) =
\gk(N) . \]
Applying the same argument to $N_1$ etc., we obtain a chain
\[ N= N_0 \supset N_1 \supset N_2 \supset \cdots \] with
$\gk(N_i/N_{i + 1}) = \gk(N)$ for each $i$. By
\cite[Lemma 5.6]{MP}, this chain must terminate. But this
process halts only when it reaches a critical module.
\end{proof}

\begin{proposition}
\label{str_hol_fin_len}
Let $M$ be a strongly holonomic $F \s A$-module, where $F \s A$ has center
$F$. Then $M$ is cyclic and has finite length.
Moreover each nonzero submodule of $M$ is also strongly holonomic.
\end{proposition}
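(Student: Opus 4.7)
The plan is to establish the three assertions in the order: (i) every nonzero submodule of $M$ is strongly holonomic, (ii) $M$ has finite length, and (iii) $M$ is cyclic. Each step will rely on its predecessor(s).

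\emph{Submodules.} Let $0 \ne N \le M$. Torsion-freeness of $N$ over each commutative $F \s C$ (with $C \le A$) is immediate from the corresponding property of $M$. By Proposition \ref{dim_is_exact}, $\gk(N) \le \gk(M) = \frac{1}{2}\rk(A)$. For the reverse inequality, I would first extract from the ``center exactly $F$'' hypothesis together with Proposition \ref{el_prop}(iv)--(v) that the commutator map is a nondegenerate alternating $\mathbb Z$-pairing on $A$; the existence of a strongly holonomic module then forces $\rk(A)$ to be even and the existence of a commutative (isotropic) subgroup $C \le A$ of rank $\frac{1}{2}\rk(A)$. Applied to such a $C$, the $F \s C$-torsion-freeness of $N$ together with Definition \ref{def_dim} yields $\gk(N) \ge \rk(C) = \frac{1}{2}\rk(A)$.

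\emph{Finite length.} Fix $C$ as above and let $S := F \s C \setminus \{0\}$, which is an Ore set. Since $M$ is $S$-torsion-free it injects into $MS^{-1}$, a module over the crossed product $D_C \s (A/C)$. Because $\gk(M) = \rk(C)$, a standard dimension count forces $MS^{-1}$ to be finite-dimensional as a $D_C$-vector space, and hence of finite length as a $D_C \s (A/C)$-module (every such submodule is in particular a $D_C$-subspace). Submodule containments $N_1 \subsetneq N_2 \le M$ localize to inclusions $N_1 S^{-1} \subseteq N_2 S^{-1}$ in $MS^{-1}$; by (i), both $N_1$ and $N_2$ are strongly holonomic, so a Bernstein-type multiplicity (additive on short exact sequences of full-GK-dimension modules, vanishing on modules of smaller GK-dimension) associated to $M$ bounds the total number of strict containments in any chain of submodules, giving finite length of $M$.

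\emph{Cyclicity.} I would induct on the length of $M$. The base case (length one) is the simplicity of $M$, for which $M = mF \s A$ for any $0 \ne m \in M$. For the inductive step, pick a simple submodule $S_0 \le M$; by (i), $S_0$ is strongly holonomic. Since $M/S_0$ is strongly holonomic of smaller length, the induction hypothesis produces $x \in M$ with $M = xF \s A + S_0$. To produce a single generator, I seek $s \in S_0$ so that $y := x + s$ satisfies $yF \s A \cap S_0 \ne 0$; by simplicity of $S_0$ this forces $S_0 \subseteq yF \s A$ and hence $M = yF \s A$. The existence of such $s$ is a genericity claim: the ``bad'' $s$ form a proper subset of $S_0$ because $F \s A$ is simple (Proposition \ref{el_prop}(vii)), so that the annihilator of any nonzero element is a proper right ideal.

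The main obstacle I expect is the finite length step, specifically controlling chains in which quotients may have smaller GK-dimension (hence are invisible to the localization at $S$); multiplicity additivity has to carry the argument there. The cyclicity step is a classical induction once finite length is in hand, and the submodule step is essentially routine.
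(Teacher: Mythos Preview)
Your argument for (i) establishes $\gk(N) \ge \tfrac12\rk(A)$ only for \emph{submodules} $N \le M$, because it rests on $N$ inheriting $F \s C$-torsion-freeness from $M$. Quotients $M/N$ (and more generally subfactors) need not inherit torsion-freeness, so your argument gives no lower bound on their GK dimension. This is precisely the obstacle you flag in (ii): a strict inclusion $N_1 \subsetneq N_2$ of submodules of $M$ can collapse after localizing at $S = F \s C \setminus \{0\}$ whenever $N_2/N_1$ is $S$-torsion, and nothing you have proved rules this out. The ``Bernstein-type multiplicity'' you invoke to repair this is not defined, and there is no evident such invariant available here that is additive on short exact sequences and vanishes below GK dimension $m$; so step (ii) as written has a genuine gap. (Incidentally, your route to producing a rank-$m$ commutative subgroup from the alternating commutator pairing is also shaky: the pairing takes values in $F^*$, not in $\mathbb Z$ or a field, so the usual Lagrangian existence arguments do not apply directly.)

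The paper closes this gap with a single contrapositive stroke that you are missing: rather than trying to bound $\gk$ from below only on submodules of $M$, it shows that \emph{every} nonzero finitely generated $F \s A$-module $V$ satisfies $\gk(V) \ge \tfrac12\rk(A)$. For if some $V'$ had $\gk(V') < m$, then Brookes' theorem \cite[Theorem~3]{Br2} produces a subgroup $C \le A$ of rank strictly greater than $m$ with $F \s C$ commutative; but then the strongly holonomic hypothesis forces $M$ to be $F \s C$-torsion-free, whence $\gk(M) \ge \rk(C) > m$, contradicting $\gk(M) = m$. Thus all nonzero subfactors of $M$ have GK dimension exactly $m$, and \cite[Lemma~5.6]{MP} immediately bounds the length of any chain. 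With finite length in hand, cyclicity follows at once from the fact that $F \s A$ is simple (Proposition~\ref{el_prop}(vii)) together with \cite[Corollary~1.5]{Ba2}; your inductive sketch for (iii) points in the right direction but the ``genericity'' claim is not a proof.
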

\begin{proof}
We claim that if an algebra $F \s A$ satisfies the conditions of Proposition \ref{str_hol_fin_len}, then
\[ \gk(V) \ge \frac{1}{2}\rk(A) \]
for each nonzero $F \s A$-module $V$.
Indeed, let $V'$ be a nonzero
$F \s A$-module with
$\gk(V') < \frac{1}{2}\rk(A)$.
Then by \cite[Theorem 3]{Br2},
there is a subgroup $C \le  A$ with
$\rk(C) > \frac{1}{2}\rk(A)$ such that $F \s C$ is commutative.
By Definition \ref{def_shol_module}, $M$ must be torsion-free over $F \s C$.
Hence by Definition \ref{def_dim},
$\gk(M) > \frac{1}{2}\rk(A)$. But this is contrary
to the hypothesis in the proposition.
Hence each nonzero subfactor of $M$
has the same GK dimension as $M$.
It now follows from \cite[Lemma 5.6]{MP}  that a strictly descending sequence
of submodules of $M$ halts after a finite number of steps.
Hence $M$ has finite length.
We also note that by Proposition \ref{el_prop}(vii), $F \s A$ is simple. 
It follows from \cite[Corollary 1.5]{Ba2} that $M$ is cyclic.
\end{proof}
\subsection{Carrier space subgroups}

We recall that
for a finitely generated 
$D \s A$-module $M$, $\Delta(M)$ is
a finite union of convex 
polyhedra. 
A $D \s A$-module is 
called \emph{pure} when each nonzero 
submodule of $M$ has GK dimension equal to that of $M$. 
It is not difficult to see, noting 
Proposition \ref{dim_is_exact}, that a 
critical module is pure.  
It was shown in 
\cite{Wa2} that if $M$ 
is \emph{pure} then 
$\Delta(M)$ is a (finite) union of 
convex polyhedra each having dimension 
equal to the GK dimension of $M$. 
A subspace $\mathcal V$ of $A^*$ is \emph{rationally defined} if it can
be generated by rational linear combinations of the elements of the chosen dual basis of $A^*$.   
A rational subspace $\mathcal V$ of $A^*$ is uniquely expressed as $\mathcal V = \ann(B)$ for a 
subgroup $B$ of $A$ with $A/B$ torsion-free.  
  
\begin{definition}
\label{def_c_spaces}
Let $M$ be a finitely generated critical $D \s A$-module with $\gk(M) = m$.  
Associated with the rationally defined polyhedron $\Delta(M)$ there is a finite family of $m$-dimensional rationally defined subspaces of $A^*$ which
occur as linear spans of the 
convex polyhedra 
constituting $\Delta(M)$.
These subspaces are called the \emph{carrier spaces} of $\Delta(M)$.  
\end{definition}
\begin{definition}
\label{c_space_subgrps}
A subgroup of $A$ of the form $\ann(\mathcal V)$, where $\mathcal V$ is a carrier space of $\Delta(M)$ and $M$ a finitely generated critical
$D \s A$-module, is called a \emph{carrier space subgroup} of $\Delta(M)$.
\end{definition}
We note that for a carrier space subgroup $C$ of 
$\Delta(M)$, \[ \rk(C)  = \rk(A) - \gk(M). \]  
If $C$ is a carrier space subgroup of $\Delta(M)$ 
then $M$ cannot be finitely generated as $F \s C$-module by \cite[Proposition 3.8]{BG2}.  
The following important property of carrier space subgroups was shown  in 
\cite{Br2} on which the proof of [\cite{Br2}, Theorem A] was based.

\begin{lemma}[Proposition 4.1(2) of \cite{BG5}]
\label{css_vir_ab}
Let $M$ be a critical finitely generated $F \s A$-module and $\mathcal V$ be
a
carrier space of $\Delta(M)$. Then $B := \ann (\mathcal V)$
contains
a subgroup $B_1$ of finite index such that $F \s B_1$ is
commutative.
\end{lemma}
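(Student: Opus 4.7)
The strategy is to reduce the statement to a finiteness claim about commutators in $F^*$ and then use the carrier-space structure of $\mathcal V$ to force those commutators to be torsion.

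By (\ref{c_m_r_1}) and (\ref{c_m_r_2}), the commutator pairing $c \colon B \times B \to F^*$, $c(b_1, b_2) := [\bar b_1, \bar b_2]$, is $\mathbb Z$-bilinear and antisymmetric, so it factors through a homomorphism $\bar c \colon B \wedge B \to F^*$. Let $H := \bar c(B \wedge B)$; since $B \wedge B$ is finitely generated, $H$ is a finitely generated abelian subgroup of $F^*$ and hence is finite precisely when it is torsion. If $|H| = N < \infty$, then for any $b, b' \in B$, $c(Nb, b') = c(b, b')^N = 1$ by Lagrange, so $c(NB, B) = \{1\}$; taking $B_1 := NB$ gives a finite-index subgroup of $B$ with $F \s B_1$ commutative. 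Thus it suffices to show that every commutator $[\bar b_1, \bar b_2]$ with $b_1, b_2 \in B$ is a root of unity, i.e.\ that $H$ is torsion.

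To establish this, I would exploit the carrier-space hypothesis via iterated trailing coefficients. Because $M$ is critical it is pure, so by the result of Wadsley discussed just before Definition \ref{def_c_spaces} each convex polyhedron constituting $\Delta(M)$ has dimension $m := \gk(M) = \dim \mathcal V$. Let $\mathcal C \subseteq \Delta(M)$ be a top-dimensional convex polyhedron spanning $\mathcal V$, and choose rational points $\phi_1, \dots, \phi_m$ in the relative interior of $\mathcal C$ forming a basis of $\mathcal V$; then $\bigcap_{i=1}^m \ker \phi_i = \ann(\mathcal V) = B$. Iterating, set $M^{(0)} := M$ and $M^{(i)} := TC_{\phi_i}(M^{(i-1)})$ after fixing a generating set at each step; then $M^{(i)}$ is a finitely generated module over the twisted group algebra on $\bigcap_{j \le i} \ker \phi_j$, and $N := M^{(m)}$ is a finitely generated $F \s B$-module.

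Granted that $N \ne 0$, the conjugation action of $\bar B$ on $N$ inside $F \s A$ realises the commutators in $H$ as eigenvalues on any $F$-spanning set. If $H$ contained an element of infinite order, the associated eigenspace decomposition of $N$ would produce infinitely many $F \s B'$-linearly independent submodules for every finite-index $B' \le B$, contradicting finite generation of $N$ over $F \s B$. Hence $H$ is torsion, therefore finite, and the reduction from the second paragraph completes the argument. The main obstacle is showing $N \ne 0$: this is not a formal consequence of the setup, but depends on $\mathcal V$ being the linear span of a \emph{top-dimensional} polyhedron of $\Delta(M)$ (not merely any $m$-dimensional rational subspace of $A^*$) together with the propagation of purity through each successive trailing coefficient construction. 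This is the technical content of Brookes' argument in \cite{Br2} on which the lemma ultimately rests.
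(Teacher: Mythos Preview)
The paper does not supply its own proof of this lemma; it is quoted from \cite[Proposition~4.1(2)]{BG5} and the surrounding text attributes the argument to \cite{Br2}. So there is no in-paper proof to compare against, and your sketch must stand on its own.

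Your opening reduction is sound: bilinearity of the commutator pairing means that once its image $H \le F^*$ is known to be finite, the subgroup of $|H|$-th powers in $B$ has finite index and $F \s B_1$ is commutative for this $B_1$. The problem lies in your final paragraph. You assert that the existence of a \emph{nonzero finitely generated} $F \s B$-module forces $H$ to be torsion via an ``eigenspace decomposition''. This cannot work as stated: every twisted group algebra $F \s B$, including a simple rank-two quantum torus whose defining commutator has infinite order, possesses nonzero finitely generated modules (for instance $F \s B$ itself), so mere finite generation over $F \s B$ imposes no constraint whatsoever on $H$. What the argument actually needs is that the module $N = M^{(m)}$ be \emph{finite-dimensional over $F$}. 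Once that is in hand, for $b_1, b_2 \in B$ with $q = [\bar b_1, \bar b_2]$ one takes determinants in the identity $\bar b_1 \bar b_2 \bar b_1^{-1} = q\,\bar b_2$ acting on $N$ to obtain $q^{\dim_F N} = 1$, whence every commutator is a root of unity.

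Your iterated trailing-coefficient construction is the right mechanism for producing such a finite-dimensional $N$, but you have stopped one step short. Choosing the $\phi_i$ suitably generic inside the top-dimensional polyhedron, each passage from $M^{(i-1)}$ to $TC_{\phi_i}(M^{(i-1)})$ not only preserves nonvanishing but drops the GK dimension by exactly one; after $m$ steps one reaches $\gk(N) = 0$, and a finitely generated module of GK dimension zero over an affine $F$-algebra is finite-dimensional over $F$. Establishing this dimension drop, together with nonvanishing at each stage, is precisely the technical content of \cite[Section~2]{Br2} that you gesture toward at the end. Without the finite-dimensionality conclusion, your last paragraph does not yield a contradiction, and the proof is incomplete.
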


We recall that a subgroup $B \le A$ is \emph{isolated} in $A$ if $A/B$ is torsion-free.
\begin{definition}[\cite{BG3}]
\label{def_ng_pt}
Let $M$ be a finitely generated $F \s A$-module. 
Let $\mathcal W$ be a rational subspace of $A^*$ and $B$ be the isolated subgroup of $A$ such that 
$\mathcal W = \ann (B)$. A point $\phi \in \mathcal W$ is 
said to be \emph{nongeneric} for $\mathcal W$ and $M$ if $TC_\phi(M)$ is not
$F \s C$-torsion for some infinite cyclic subgroup $C \le B$.   
\end{definition}   

The following fact was first shown in \cite[Lemma 4.5]{BG5}.
The proof was based on a geometric
characterization of nongeneric points in $\Delta(M)$.
\begin{lemma}
\label{ker_ng_pt}
Let $M$ be a critical strongly holonomic $F \s A$-module, where $\rk(A) > 2$.
For each carrier space subgroup $U$ of $\Delta(M)$,
there is a subgroup $W$ of $A$ with rank equal to
that of $U$ such that $F \s W$ is commutative and
\[ 0 <   \rk(U \cap W) < \rk(U). \]
\end{lemma}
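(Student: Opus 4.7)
The plan is to exploit the alternating commutator form $c(a,b) := [\bar a, \bar b] \in F^*$ and construct $W$ by a symplectic-reduction argument.

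First, by Lemma \ref{css_vir_ab}, $U$ contains a finite-index subgroup $U_1$ with $F \s U_1$ commutative and $\rk(U_1) = \rk(U) = m := \frac{1}{2}\rk(A)$. All three properties required of $W$ in the conclusion are preserved under replacing $U$ by a finite-index subgroup, so it suffices to exhibit $W$ working for $U_1$. The pairing $c$ is $\mathbb Z$-bilinear and alternating by (\ref{c_m_r_1}) and (\ref{c_m_r_2}); subgroups $C$ with $F \s C$ commutative are precisely those isotropic for $c$. The hypothesis $Z(F \s A) = F$ (Proposition \ref{el_prop}(vii)) forces $c$ to have trivial left and right kernel, so $U_1$ is isotropic of rank $m = \frac{1}{2}\rk(A)$, i.e., a Lagrangian.

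Second, after replacing $A$ with a suitable finite-index subgroup, I would arrange that $c$ is valued in a torsion-free subgroup of $F^*$, whence $c$ becomes a non-degenerate alternating $\mathbb Z$-valued pairing on a free abelian group of rank $2m$, making $A$ into a symplectic lattice with $U_1$ a Lagrangian sublattice. Pick a primitive cyclic subgroup $C \le U_1$ and pass to the symplectic reduction $C^\perp/C$, a symplectic lattice of rank $2m - 2 \ge 2$ (using $\rk(A) > 2$), containing the Lagrangian $U_1/C$ of rank $m - 1$. A symplectic basis argument in $C^\perp/C$ produces a second Lagrangian $\overline W \ne U_1/C$ (for example, swap one generator of $U_1/C$ with its symplectic dual). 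Lifting back yields an isotropic subgroup $W \le A$ of rank $m$ with $C \le W$ and rational image distinct from that of $U_1$; hence $F \s W$ is commutative, $\rk(W \cap U_1) \ge \rk(C) = 1$, and $\rk(W \cap U_1) < m$, since otherwise $W$ and $U_1$ would be commensurable, contradicting the choice of $\overline W$.

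The principal obstacle is the torsion-reduction in the second step: the image $\langle c(A,A) \rangle \le F^*$ may contain roots of unity, and passing to a finite-index subgroup of $A$ to kill torsion can in principle enlarge the radical of $c$. Controlling this requires a careful analysis of the finitely generated subgroup of $F^*$ generated by the commutators, together with the subgroups of $A$ on which $c$ takes only finite-order values, to ensure both that the reduced pairing remains non-degenerate and that $U_1$ remains a Lagrangian of the correct rank. Once the symplectic framework is in place, the Lagrangian-construction step is standard symplectic linear algebra and the concluding rank calculations are routine.
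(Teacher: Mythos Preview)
Your approach is genuinely different from the paper's and contains a real gap.

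The paper's proof uses the module $M$ in an essential way: it invokes \cite[Corollary 3.7]{BG3} to find a nonzero nongeneric point $\phi$ in the carrier space $\ann(U)$, then localizes $M$ at an infinite cyclic $C \le U$ so that the induced character $\phi_C$ lies in $\Delta(M_C)$, where $M_C$ is a critical module over $D_C \s A/C$.  A carrier space subgroup $V/C$ for $\Delta(M_C)$ is then produced, and \cite[Corollary 3.3]{AG} (applied to a finite $D_C$-dimensional module for $D_C \s V/C$) yields a rank-$m$ subgroup $W \le V$ with $F \s W$ commutative and $W \cap C = 1$.  The inequalities $0 < \rk(U \cap W) < m$ then follow because both $U$ and $W$ sit inside $K = \ker\phi$, which has rank at most $2m - 1$, while $W \cap C = 1$.

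Your symplectic-reduction argument, by contrast, discards $M$ entirely after extracting the commutative finite-index subgroup $U_1 \le U$, and attempts to manufacture $W$ from the commutator form $c$ alone.  The specific error is the claim that after passing to a finite-index subgroup one obtains ``a non-degenerate alternating $\mathbb{Z}$-valued pairing''.  The image $\langle c(A,A)\rangle \le F^*$ is a finitely generated abelian group of torsion-free rank $r$, and there is no mechanism for forcing $r = 1$; in general $c$ is $\mathbb{Z}^r$-valued with $r$ as large as $\binom{2m}{2}$.  For such forms the standard symplectic machinery (symplectic bases, Witt extension, abundance of Lagrangians) simply fails.

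Here is a concrete obstruction.  Take $\rk(A) = 4$ with basis $e_1,\dots,e_4$ and set $c(e_1,e_2)=1$ while $c(e_1,e_3)$, $c(e_1,e_4)$, $c(e_2,e_3)$, $c(e_2,e_4)$, $c(e_3,e_4)$ are multiplicatively independent in $F^*$.  Then $F \s A$ has centre $F$, and $U_1 = \langle e_1,e_2\rangle$ is isotropic of rank $2$.  A direct calculation shows that every rank-$2$ isotropic subgroup of $A$ is commensurable with $U_1$, so no $W$ with $0 < \rk(U_1\cap W) < 2$ exists.  This does not contradict the lemma, because such an $F \s A$ admits no strongly holonomic module (indeed it has no tensor decomposition as in Theorem~B); but it shows that your argument, which never uses the strongly holonomic hypothesis beyond invoking Lemma~\ref{css_vir_ab}, cannot succeed.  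The existence of a second Lagrangian meeting $U$ properly is exactly what the module-theoretic input (nongeneric points and the localization $M_C$) is needed to guarantee.
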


\begin{proof}
By Lemma \ref{css_vir_ab}, $U$ has a subgroup $U'$ of finite index such that $F \s U'$ is commutative.
By Definition \ref{def_shol_module}, $M$ is torsion-free over $F \s U'$.
We claim that $M$ is $F \s U$-torsion-free.
Indeed, if the $F \s U$-torsion submodule $t_U(M)$ of $M$ is nonzero, we may pick a finitely generated nonzero $F \s U$-submodule $N$ of $t_U(M)$. 

Clearly $N$ is 
$F \s U$-torsion and $F \s U'$-torsion-free. 
In view of \cite[Proposition 2.6]{BG2}, 
$\gk(N) < \rk(U)$. Moreover by Definition \ref{def_dim}, $\gk(N) \ge \rk(U')$ since $N$ is torsion-free as 
$F \s U'$-module. We thus have a contradiction and so $M$ must be $F \s U$-torsion-free. 

By \cite[Corollary 3.7]{BG3}, $\mathcal V := \ann(U)$ contains a nonzero point $\phi$ which
is nongeneric for $\mathcal V$ and $M$.
By \cite[Lemma 3.1]{BG3}, $U$ has an infinite cyclic subgroup $C$ such that
$\phi_C \in \Delta(M \otimes_{F \s A} (F \s A)S^{-1})$, where $\phi_C$ is the character of $(A/C)^*$ induced by $\phi$ and $S = F \s C \setminus \{0\}$. 
Note that $M_C := M \otimes_{F \s A} (F \s A)S^{-1}$ is
an  $(F \s A)S^{-1}$-module and the latter a crossed product $D_C \s A/C$,
where $D_C$ denotes the quotient division ring of $F \s C$. By \cite[Lemma 4.5(2)]{BG3}, $M_c$ is critical.

Now $\phi_C$ lies in a (rationally defined) carrier space $\mathcal V_C := \ann (V/C)$ for some $V < A$. Set $K = \ker \phi$. 
Then $K/C = \ker \phi_C \ge V/C$ 
and so $V \le K$. 
It was shown in \cite[Section 2]{Br2} that 
$D_C \s V/C$ has a nonzero module that is finite dimensional as a $D_c$-space.

Note that \[ \gk(M_C) = \gk(M) - \rk(C) \]
in view of Definition \ref{def_dim}. Since $\dim \mathcal V_C = \gk(M_C)$, hence 
\[ \rk(V/C) = \rk(A/C)  - \gk(M_C) = \rk(A) - \gk(M) = m. \]
By \cite[Corollary 3.3]{AG}, $V$ contains a rank $m$ subgroup $W$ with $F \s W$
commutative. Moreover $W$ is constructed in \cite{AG} so that $W \cap C = 1$, whence 
\[ \rk(U \cap W) < \rk(U). \]
As $\phi$ is nonzero, $\rk(K) \le 2m - 1$ and since $U, W \le K$, hence $\rk(U  \cap W) \ge 1$.
\end{proof}

The next lemma is 
a generalization of [BG5, Lemma 4.4].
\begin{lemma}
\label{AG_reslt}
Suppose that $F \s A$ has a
finitely generated module
$M$ and $A$ has a subgroup $C$ with $A/C$ torsion-free,
$\rk(C) = \gk(M)$,
and $F \s C$ commutative. Suppose moreover that $M$
is not $F \s C$-torsion.
Then $C$ has a virtual complement $E$ in $A$
such that $F \s E$ is commutative. 
In fact given $\mathbb Z$-bases 
$\{ x_1,\cdots, x_r \}$ and $\{ x_1,\cdots, x_r, 
x_{r + 1}, \cdots, x_n \}$ for 
$C$ and $A$ respectively there exist monomials $\mu_j$, where $j = 
 r + 1, \cdots n $, 
in $F \s C$, 
and an integer $s > 0$ such that the monomials $\mu_j \bar {x}_j^s$ commute in $F \s A$.
\end{lemma}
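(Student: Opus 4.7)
The strategy is to localize $F \s A$ at the Ore set $S_C = F \s C \setminus \{0\}$, thereby reducing the hypothesis on $M$ to the existence of a nonzero finite-dimensional module over a crossed product with commutative base, and then to apply \cite[Corollary 3.3]{AG} (as in the proof of Lemma \ref{ker_ng_pt}) to produce the commutative complement $E$.

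First, since $F \s C$ is a commutative integral domain, $S_C$ is Ore in $F \s A$ and the localization $(F \s A)S_C^{-1}$ is a crossed product $D_C \s (A/C)$ of the free abelian group $A/C$ (of rank $n - r$, using $A/C$ torsion-free) over the quotient field $D_C$ of $F \s C$. Since $M$ is not $F \s C$-torsion, $\tilde M := M \otimes_{F \s A}(F \s A)S_C^{-1}$ is nonzero. Extending the formula used in the proof of Lemma \ref{ker_ng_pt} by iteration over a chain of infinite cyclic quotients of $C$ (or directly from Definition \ref{def_dim}) yields $\gk(\tilde M) = \gk(M) - \rk(C) = 0$, so $\tilde M$ is finite-dimensional over $D_C$.

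Applying \cite[Corollary 3.3]{AG} to $D_C \s A/C$ together with the nonzero finite-dimensional $D_C$-module $\tilde M$ produces a subgroup $W \le A$ with $\rk(W) = n - r$, $F \s W$ commutative, and $W \cap C = 1$. Since $\rk(C + W) = n = \rk(A)$, we have $[A : C \oplus W] < \infty$, so $E := W$ is the desired virtual complement to $C$ in $A$ with $F \s E$ commutative.

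For the explicit monomial form, let $\pi \colon A \to A/C$ denote the canonical projection. Then $\pi(W)$ is a finite-index sublattice of the free abelian group $A/C = \bigoplus_{j > r} \mathbb Z \pi(x_j)$; letting $s > 0$ be the exponent of the finite group $(A/C)/\pi(W)$, we have $s\pi(x_j) \in \pi(W)$ for every $j > r$. Choose $w_j \in W$ with $\pi(w_j) = s\pi(x_j)$, which forces $w_j = c_j x_j^s$ for some $c_j \in C$. Then in $F \s A$, $\bar w_j = \mu_j \bar{x}_j^s$ for the monomial $\mu_j \in F \s C$ obtained by absorbing the twisting cocycle scalar into $\bar c_j$, and the elements $\mu_j \bar{x}_j^s$ pairwise commute in $F \s A$ since they lie in the commutative subalgebra $F \s W$. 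The main obstacle is justifying the dimension reduction $\gk(\tilde M) = \gk(M) - \rk(C)$ for $C$ of arbitrary rank; this follows from Definition \ref{def_dim} via the correspondence between $F \s B$-torsion-freeness for subgroups $B \ge C$ and $D_C \s (B/C)$-torsion-freeness after localization.
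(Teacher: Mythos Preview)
Your argument is correct and shares the paper's core strategy: localize at $S_C$, observe that $MS_C^{-1}$ is a nonzero finite-dimensional $D_C$-module (the paper cites \cite[Lemma~2.3]{BG2} for this, while you argue directly from Definition~\ref{def_dim}), and then invoke the Aljadeff--Ginosar results. The difference lies in how the explicit monomial form is extracted. The paper does not black-box \cite[Corollary~3.3]{AG}; instead it passes to the $s$-fold exterior power $\wedge^s(MS_C^{-1})$ (with $s=\dim_{D_C}MS_C^{-1}$) to obtain a one-dimensional module over the crossed product $R'$ with the $s$-th power cocycle, applies \cite[Proposition~3.2]{AG} to get commuting monomials $\mu_j\bar x_j$ in $R'$, and then compares commutators in $R$ and $R'$ to deduce that the $\mu_j\bar x_j^{\,s}$ commute in $F\s A$. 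Your route is more elementary: you accept the commutative complement $W$ from \cite[Corollary~3.3]{AG}, let $s$ be the exponent of the finite group $(A/C)/\pi(W)$, and read off $w_j=c_jx_j^s\in W$ so that the monomials $\mu_j\bar x_j^{\,s}=\bar w_j$ commute simply because $F\s W$ is commutative. Your $s$ need not equal the paper's $s$, but both satisfy the lemma; your extraction is shorter, while the paper's argument is more self-contained in that it essentially reproduces the relevant part of \cite{AG} and makes the dependence of $s$ on $M$ explicit.
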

\begin{proof}
Let $\bar {x_i}\bar{x_j} = q_{ij}\bar{x_j}\bar{x_i}$, where $i,j = 1, \cdots, n$ and $q_{ij} \in F^*$.   
We set $S = F \s C\setminus\{0\}$ and denote the
quotient field $(F \s C)S^{-1}$ by $F_S$.
Then $(F \s A)S^{-1}$ is a crossed product
\[ R  = F_S \s \langle x_{r + 1}, \cdots, x_{n} \rangle. \]
The corresponding module of fractions
$MS^{-1}$ is nonzero as $M$
is not $S$-torsion by the hypothesis.
By the hypothesis, 
$\gk(M)  = \rk(C)$ and so in view of
\cite[Lemma 2.3]{BG2}, 
$MS^{-1}$ is finite dimensional as an $F_S$-space.
It is shown in \cite[section 3]{AG} that if
$R$ has a module that is one dimensional
over $F_S$
then there exist monomials
$\mu_i \in F[x_1^{\pm 1}, x_2^{\pm 1}, \cdots, x_r^{\pm 1}]$
such that the monomials $\mu_ix_i$, where $r + 1 \le i \le n$,
mutually commute. Thus we may take
$E = \langle \mu_{r + 1}x_{r + 1}, \cdots, \mu_nx_n \rangle$
in this case.
But as observed in the remark following \cite[Corollary 3.3]{AG},
 the
$s$-fold exterior power $\wedge^s(MS^{-1}_{F_S})$, where
$s = \dim_{F_S} MS^{-1}$, is a
one dimensional module over 
$R' : = F_S \s^s \langle x_{r + 1}, \cdots, x_{n} \rangle$
with the $2$-cocycle being the $s$-th power of the 2-cocycle of
$R$. Thus for $R'$,
\begin{equation} 
\label{s_th_power_of_cocyle}
q'_{ij} = 
\begin{cases} 
q_{ij} &  \forall  i  \in \{ 1, \cdots,  r\}, j \in \{ 1, \cdots, n \} \\ q_{ij}^s  &  \forall i, j \in \{ r  + 1, \dots, \ n \}  
\end{cases}
\end{equation}
By \cite[Proposition 3.2]{AG}, the monomials $\mu_jx_j$ commute in $R'$, that is,
\[ 1 = 
[\overline{\mu_k} \overline {x_k},\overline{\mu_l} 
\overline{x_l}] = [\overline {\mu_k}, \overline{x_l}]
[\overline {x_k}, \overline{\mu_l}][\overline{x_k}, 
\overline{x_l} ] \ \ \ \forall k, l \in \{ r + 1, \cdots, n \}. \]   
In view of (\ref{s_th_power_of_cocyle}), 
$[\overline {\mu_k}, \overline{x_l}]$ and $[\overline {x_k}, \overline{\mu_l}]$  
are the same in $R$ and $R'$ but $[\overline{x_k}, 
\overline{x_l} ]$ is its $s$-th power in $R'$. 
It easily follows from this that
the elements $\{\mu_jx_j^s\}_{j = r+1}^n$ commute in $R$. 
\end{proof}
The final lemma of this section is 
somewhat technical and is used in the proof of Theorem B.
\begin{lemma}
\label{four_subgroups}
Suppose that $F \s A$ 
has a strongly
holonomic module. If 
$\rk(A) = 2m$, where $m > 1$, then $A$ has 
nontrivial subgroups 
$B_i$, where $i = 1, \cdots, 4$, such that 
$F \s B_i$ is commutative and 
which satisfy the following conditions: 
\begin{align*}
 [\overline {B_1} , \overline {B_2}] =
[\overline {B_2}, \overline {B_3}] = 
[\overline {B_3}, \overline {B_4}] &= 
1 , \\   
B_1 \cap B_2 = 
B_3 \cap B_4 = 
B_1B_2 \cap B_3B_4 &= 
 1 , \\
\rk(B_1) + \rk(B_2) = \rk(B_2) + \rk(B_3) = 
\rk(B_3) + \rk(B_4) &= m.  
\end{align*}
\end{lemma}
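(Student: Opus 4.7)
The plan is to extract the four subgroups from a critical strongly holonomic submodule, using Lemma~\ref{ker_ng_pt} to produce a first commutative subgroup $W$, Lemma~\ref{css_vir_ab} to produce a second one that shares a boundary with $W$, and Lemma~\ref{AG_reslt} to produce a third that virtually complements $W$. First, by Proposition~\ref{ecm} we may replace the given strongly holonomic module by a critical submodule $N$, which is again strongly holonomic of GK dimension $m$ by Proposition~\ref{str_hol_fin_len}. Since $N$ is critical and hence pure, the result of \cite{Wa2} recalled after Theorem~\ref{BG2_Theorem_4.4} shows that $\Delta(N)$ is a union of rational convex polyhedra each of dimension $m$, so it admits at least one carrier space subgroup $U$ with $\rk(U)=m$. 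Applying Lemma~\ref{ker_ng_pt} to $U$ yields a commutative $W\le A$ with $\rk(W)=m$ and $0<\rk(U\cap W)<m$. After passing to isolated closures so that the relevant quotients are torsion-free, set $B_2:=U\cap W$ and take $B_3$ to be a complement of $B_2$ in $W$; then $B_2\cap B_3=1$, $[\overline{B_2},\overline{B_3}]=1$ follows from commutativity of $F\s W$, and $\rk(B_2)+\rk(B_3)=m$.

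For $B_1$, apply Lemma~\ref{css_vir_ab} to $U$ to obtain a finite index commutative subgroup $U_0\le U$; after shrinking $B_2$ to lie inside $U_0$ if necessary, let $B_1$ be a complement of $B_2$ in $U_0$. Then $B_1\cap B_2=1$, $[\overline{B_1},\overline{B_2}]=1$ because $F\s U_0$ is commutative, and $\rk(B_1)+\rk(B_2)=\rk(U_0)=m$. For $B_4$, apply Lemma~\ref{AG_reslt} with $C=W$ (whose hypotheses are satisfied because $\rk(W)=m=\gk(N)$ and $N$ is not $F\s W$-torsion by strong holonomicity) to obtain a commutative rank-$m$ virtual complement $E\le A$ of $W$ with $W\cap E=1$. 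Take $B_4$ to be a rank-$\rk(B_2)$ subgroup of the centralizer
\[ Z_E(B_3) := \{\, e\in E \mid [\bar e,\bar b]=1 \ \text{for all}\ b\in B_3\,\}. \]
Then $F\s B_4$ is commutative (as $B_4\le E$), $[\overline{B_3},\overline{B_4}]=1$ by construction, $B_3\cap B_4=1$ because $W\cap E=1$, and $\rk(B_3)+\rk(B_4)=(m-\rk(B_2))+\rk(B_2)=m$.

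It remains to check $B_1B_2\cap B_3B_4=1$. Since $\rk(B_1B_2)+\rk(B_3B_4)=2m=\rk(A)$, it suffices to show that the two subgroups together span a finite index subgroup of $A$. Here $B_1B_2\le U$ and $B_3B_4\le W\oplus E$; the $W$-projections of these two subgroups are $B_2$ and $B_3$ respectively, and $B_2\cap B_3=1$, while by construction the $E$-part of $B_3B_4$ lies in $Z_E(B_3)$ which can be chosen disjoint from the $E$-component of $U$. A careful projection argument along the finite index direct sum $W\oplus E\subseteq A$, possibly after passing to a further finite index subgroup, then forces the intersection to be trivial. The main obstacle is producing $B_4$ with all three properties simultaneously: one must show that $Z_E(B_3)$ has rank at least $\rk(B_2)$ and still contains a subgroup avoiding $U\cap E$. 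This requires a delicate rank analysis of the bimultiplicative commutator pairing $B_3\times E\to F^*$, using that the center of $F\s A$ is exactly $F$ to force the pairing to be non-degenerate and thereby to bound the rank of the image of $E\to\hm(B_3,F^*)$ inside the finitely generated commutator subgroup of the unit group of $F\s A$ (Proposition~\ref{el_prop}(v)); this rank count on a multiplicative pairing into $F^*$ is the technically delicate step.
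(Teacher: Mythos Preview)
Your construction of $B_4$ has a genuine gap. You need a rank-$\rk(B_2)$ subgroup of $Z_E(B_3)$, and you acknowledge that this requires $\rk(Z_E(B_3)) \ge \rk(B_2)$ via a ``delicate rank analysis'' of the pairing $B_3 \times E \to F^*$ --- but you do not carry it out, and non-degeneracy alone does not give it. The hypothesis that $F \s A$ has center $F$ only tells you that the map $E \to \hm(W, F^*)$ is injective; it says nothing about the rank of the image of the restricted map $E \to \hm(B_3, F^*)$. If, for instance, $\rk(B_3) = 1$ and the commutators $[\bar e_i, \bar b_3]$ for a basis $\{e_i\}$ of $E$ and a generator $b_3$ of $B_3$ are multiplicatively independent in $F^*$, then $Z_E(B_3)$ is trivial while you need it to have rank $m-1$. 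Nothing you have invoked rules this out. Your argument for $B_1B_2 \cap B_3B_4 = 1$ is likewise only a sketch: with $B_1B_2 \le U$ and $B_3B_4 \le W E$ there is no ambient direct-sum decomposition separating them, and the phrase ``possibly after passing to a further finite index subgroup'' does not fix this.

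The paper avoids both problems by a different placement of $B_3$. It applies Lemma~\ref{AG_reslt} to the carrier space subgroup $U$ (not to $W$) to get a commutative virtual complement $V$; after passing to finite index one may assume $A = U \oplus V$ with $F \s U$ and $F \s V$ both commutative. Only then is Lemma~\ref{ker_ng_pt} invoked to produce $W$. The key move is to define $B_3 := p_V(W)$, the projection of $W$ to $V$, rather than a complement of $B_2$ inside $W$, and to take $B_4$ as a complement of $B_3$ in $V$. Now $B_1, B_2 \le U$ and $B_3, B_4 \le V$, so $B_1B_2 \cap B_3B_4 \le U \cap V = 1$ is automatic, and $[\overline{B_3}, \overline{B_4}] = 1$ holds simply because $F \s V$ is commutative --- no centralizer rank count is needed. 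The only nontrivial relation, $[\overline{B_2}, \overline{B_3}] = 1$, is a three-line computation: for $v_3 \in B_3$ choose $u \in U$ with $u v_3 \in W$; then for $u_2 \in B_2 = U \cap W$ one has $1 = [\bar u \bar v_3, \bar u_2] = [\bar u, \bar u_2][\bar v_3, \bar u_2] = [\bar v_3, \bar u_2]$, using commutativity of $F \s W$ and of $F \s U$.
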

\begin{proof}
By Proposition \ref{str_hol_fin_len}, $M$ contains a simple 
submodule which is also strongly holonomic. Hence we may assume $M$ is 
simple.

Let $U$ be a carrier space subgroup of 
$\Delta(M)$. As shown in the first paragraph of the proof of Lemma \ref{ker_ng_pt}, $M$ is torsion-free as 
$F \s U$-module. Moreover as noted above $\rk(U) = m = \gk(M)$.
Let $V$ be a (virtual) complement to $U$ in $A$ as given by Lemma \ref{AG_reslt} such that $F \s V$ is commutative. By Lemma \ref{css_vir_ab} there is a finite 
index subgroup
$U_0 \le U$ so that 
$F \s U_0$ is
commutative. But then 
$A_0 : =U_0V$ has  
finite index in $A$. 
In particular, $M$ may be regarded as a finitely generated $F \s A_0$-module $M_0$. By \cite[Lemma 2.7]{BG2}, $\gk(M_0) = \gk(M)$ and it follows that $M_0$ 
is a strongly holonomic $F \s A_0$-module.
For this reason we will assume that 
$A = UV$ with $F \s U$ and $F \s V$  commutative.
   
By Lemma \ref{ker_ng_pt}, there is also a subgroup $W$ with $\rk(W) = \rk(U) = m$
which intersects nontrivially with $U$.
Set $B_2 : = U \cap W$ and pick a subgroup $B_1$ in  $U$ maximal with respect to $B_1 \cap B_2 = 1$. 
Let $p_V :  A = U \oplus V \rightarrow V$ be the projection and $p'_V$ its
restriction to $W$. Then $\ker p'_V = B_2$ and so
\[ \rk(p'_V(W)) + \rk(B_2) = \rk(W) = m. \]
Set $B_3:= p'_V(W)$ and let $B_4 \le V$ be a subgroup maximal
with respect to
\[ B_3 \cap B_4 = 1. \]  
As $B_1, B_2 \le U$ and $F \s U$ is commutative, hence
$[\overline{B_1}, \overline{B_2}] = 1$ and similarly we can show that 
$[\overline{B_3}, \overline{B_4}] = 1$.
We claim that $[\overline{B_2}, \overline{B_3} ] = 1$.
Indeed, let $u_2 \in B_2 = U \cap W$ and $v_3 \in B_3$.
As $B_3 = p'_V(W)$, hence $uv_3 \in W$ for some
$u \in U$. Since $F \s W$ is commutative, hence
\[ 1 = [\bar u \bar {v_3}, \bar {u_2} ] =
[\bar u , \bar u_2][\bar {v_3}  , \bar u_2 ]. \] 
Moreover, as $F \s U$ is commutative, hence
$[\bar u , \bar u_2] = 1$ and
so $[\bar {v_3}  , \bar u_2 ] = 1$.
\end{proof}

We shall now give a proof of Theorem B.

\begin{theorem_T_4.2_BG5}
Suppose that an algebra $F \s A$ with center $F$ and $\rk(A) = 2m$ has a strongly holonomic module.
Then for a finite index subgroup $A'$ in $A$,
\[ F \s A' =  F \s B_1 \otimes_F  \cdots \otimes_F F \s B_m, \]
where each $B_i \cong \mathbb Z \oplus \mathbb Z$ and
$m = \frac{1}{2}\rk(A)$.
\end{theorem_T_4.2_BG5}

\begin{proof}
We shall use the notation 
\[ F \s A \stackrel{\mbox{vir}}{=} 
F \s A_1 \otimes_F \cdots \otimes_F F \s A_k \] 
to express that $A$ has a subgroup $A'$ of finite index 
such that 
\[ F \s A' = F \s A_1 \otimes_F \cdots  \otimes_F F\s A_k. \]    
We shall prove the theorem using induction. As there is
nothing to be proved for $\rk(A) = 2$,
we  assume that
$\rk(A) = 2m$, where $m > 1$. We also assume
that the theorem
holds for all smaller values of $m$ and for all
fields $F$. 

Let $B_j$, where $j = 1, \cdots, 4$, be as in
Lemma \ref{four_subgroups} and set $B: = \prod_{i = 1}^4 B_i$.
By the same lemma, $F \s B_2B_3$ is commutative and \[ \rk(B_2B_3) = m = \gk(M). \]
We fix bases in the subgroups $B_j$, where $j = 1, \cdots, 4$, as follows:
\begin{align*}
B_1 &:= \langle u_{k + 1}, \cdots, u_m \rangle,\\
B_2 &:=  \langle u_1, \cdots, u_k \rangle, \\
B_3 &:=  \langle w_{k +1}, \cdots, w_m  \rangle, \\
B_4 &:=  \langle  w_1, \cdots, w_k \rangle.
\end{align*}
By Lemma \ref{AG_reslt}
(with $C = B_2B_3$),
there are monomials $\mu_j \in F \s B_2$
and $\nu_j \in F \s B_3$, where $j = 1, \cdots m$,
such that the monomials in 
\begin{equation}
\label{ind_comt_elmts}
 \{\mu_i\nu_i \bar{w}_i^s \}_{i = 1, \cdots, k} \  \cup \  \{ \mu_j\nu_j\bar{u}_j^s \}_{j = k + 1, \cdots, m} 
\end{equation}
commute mutually for some $s > 0$.
Set  
\begin{equation}
\label{ch_var}
\bar{w}_i'  = \nu_i \bar {w}_i^s,  \ \ \ 
\bar{u}_j'  = \mu_j \bar{u}_j^s  \ \ \
\forall i \in \{1, \cdots, k\},  j \in \{k + 1, \cdots, m\}.
\end{equation}  
As just noted in (\ref{ind_comt_elmts}), 
\begin{align*}
1 &= [\mu_i\nu_i\overline{w_i}^s,
\mu_j{\nu_j}\overline{u_j}^s] \\ 
&= 
[{\mu_i}, {\mu_j}{\nu_j}\overline{u_j}^s][{\nu_i}\overline{w_i}^s , \mu_j{\nu_j}\overline{u_j}^s] \\ 
&= [ {\mu_i},  {\mu_j}{\nu_j} ][ {\mu_i},  \overline{u_j}^s][{\nu_i} \overline{w_i}^s , {\mu_j}   {\nu_j}  \overline{u_j}^s].
\end{align*}
But $[ {\mu_i},  {\mu_j}  {\nu_j}] = 1$ since $F \s B_2B_3$ is commutative and 
$[ {\mu_i},  \overline{u_j}^s] = 1$ because by Lemma
\ref{four_subgroups}, $[\overline{B_1}, \overline{B_2}] = 1$. 
We thus have
\begin{align*}
1 &= [{\nu_i} \ \overline{w_i}^s , {\mu_j}{\nu_j}  \overline{u_j}^s]  \\ 
&= [{\nu_i} \ \overline{w_i}^s , {\mu_j}    \overline{u_j}^s][{\nu_i}  \overline{w_i}^s, {\nu_j}] \\
&= 
[{\nu_i}  \overline{w_i}^s , {\mu_j}    \overline{u_j}^s]
[{\nu_i}, {\nu_j}][\overline{w_i}^s, {\nu_j}]. 
\end{align*}
By Lemma 
\ref{four_subgroups}, $F \s B_3$ is commutative and $[\overline{B_3}, \overline{B_4}] = 1$. 
It follows that 
\[ [{\nu_i}, {\nu_j}] = [\overline{w_i}^s, {\nu_j}]  = 1. \]  Hence in view of (\ref{ch_var}),  
\begin{equation}
[\overline{w_i'}, \overline{u_j'}] = 1.
\end{equation}
Setting $B_1' : = \langle u_{k + 1}', \cdots, u_m' \rangle$ and
$B_4' : = \langle w_1', \cdots, W_k' \rangle $,
we have
\[ [\overline{B_1'}, \overline{B_4'}] = \langle 1 \rangle, \]
which in view of Lemma \ref{four_subgroups} gives
\begin{equation}
\label{comtr_rl_2}
 [\overline{B_1'B_3}, \overline{B_2B_4'}] = \langle 1 \rangle. 
\end{equation}
Hence
\begin{equation}
\label{comtr_rl_2'}
F \s B = F \s B_1'B_3 \otimes_F F \s B_2B_4'.
\end{equation}
By the hypothesis in the theorem, $F \s A$
has center exactly $F$,
hence in view of (\ref{comtr_rl_2}),
$F \s B_1'B_2B_3$ has center exactly
$F \s B_2$.
Moreover, $\mathcal C : = F \s B_1'B_2$ is commutative and $M$ is thus torsion-free over $\mathcal C$. Hence for a finitely generated critical $F \s B_1'B_2B_3$-submodule $N$ of $M$, $\gk(N) = m$.
Localizing  $F \s B_1'B_2B_3$ at
$F \s B_2\setminus \{0\}$ we obtain $F' \s B_1'B_3$, where $F'$ is the quotient field
of the integral domain $F \s B_2$.

We claim that
$M': = M(F \s B_2)^{-1}$
is a strongly holonomic
$F' \s B_1'B_3$-module.
Indeed, in view of
\cite[Lemma 4.5(2)]{BG3}, 
\[ \gk(M') = \gk(M) - k = m - k =
\frac{1}{2}\rk(B_1'B_3), \] and $M'$ is
$F' \s C$-torsion-free, whenever
$F' \s C$ is commutative (see \cite[Lemma 4.3]{BG5}).
We note that the 2-cocycle of $F' \s B_1'B_3$ is the restriction of 
the 2-cocycle of $F \s B_1'B_2B_3$ to $B_1'B_3$. 
Then the induction hypothesis yields:
\begin{equation}
\label{comtr_rl_3}
F \s B_1'B_3 \stackrel{\mbox{vir}}{=} F \s C_1 \otimes_F F \s C_2 \otimes_F \cdots \otimes
F \s C_{m - k}.
\end{equation}
By parallel
reasoning applied to $F \s B_2B_3B_4'$
(which has center $F \s B_3$),
we obtain:
\begin{equation}
\label{comtr_rl_4}
F \s B_2B_4' \stackrel{\mbox{vir}}{=}
F \s E_{1} \otimes_F F \s E_2 \otimes_F \cdots \otimes_F
F \s E_{k}. 
\end{equation}
Combining (\ref{comtr_rl_2'}), (\ref{comtr_rl_3}) and
(\ref{comtr_rl_4}) shows the assertion in the theorem.
\end{proof}

\section{Nonholonomic simple modules}

We now consider the problem of the GK dimensions of simple modules over the algebras $F \s A$.
In particular, we wish to show there can be simple $F \s A$-modules with distinct GK dimensions. 
We shall accomplish this by embedding $F \s A$ in a principal ideal domain (PID).      
Given an algebra $F \s A$, let $B$ be a subgroup of $A$ with $A/B$ infinite cyclic.
The localization $F \s A(F \s B)^{-1}$ is a crossed product $D \s A/B$,  where $D$ denotes the 
quotient division ring $F \s B(F \s B)^{-1}$.
Moreover, if $A/B = \langle uB \rangle$ then $D \s A/B$ is a skew-Laurent extension 
$D[\bar u^{\pm 1}, \sigma]$, where $\sigma(d) = \bar u d \bar u^{-1}$ for all $d \in D$.

\begin{definition}
Let $R$ be a PID. An element
$r \in R$ is \emph{irreducible} when $r = st$, where $s, t \in R$, implies that
either $s$ or $t$ is a unit in $R$. 
\end{definition}
Theorem C in section \ref{introd} immediately follows from the next proposition. 
\begin{proposition}
\label{n_hol_sim_mod_c}
Suppose that $F \s A$
has center exactly $F$
and $A$ has
a subgroup $B$ with $F \s B$ commutative and 
with $A/B$ infinite cyclic.
Then $F \s A$ has a simple module $S_1$ with $\gk(S_1) = 1$.
Furthermore, let $A/B =
\langle uB \rangle$  and $R$ be the right Ore localization
$R := F \s A(F \s B)^{-1}$.
Let $r$ be an irreducible element in the PID $R$.
If $\mathcal J : = F \s A \cap rR$ contains a nonzero element $\gamma$
such that in the (unique) expression
\begin{equation}
\label{unit_poly}
\gamma = \sum_{i = s}^t  \beta_i\bar u^i, 
\end{equation}
where  $s, t \in \mathbb Z$, $\beta_i \in F\s B$ and $\beta_s$ and $\beta_t$ are units in $F \s B$, then $S_2 : = F\s A/ \mathcal J$
is a simple $F \s B$-torsion-free module with
$\gk(S_2) = n - 1$.
\end{proposition}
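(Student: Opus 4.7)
The plan is to use the right Ore localization $R := F \s A (F \s B)^{-1}$, which under the hypotheses is the skew-Laurent ring $D[\bar u^{\pm 1}; \sigma]$ over the division ring $D := \mathrm{Frac}(F \s B)$, and hence a PID. Under the standard bijection $\mathcal I \mapsto \mathcal I \cap F \s A$ between right ideals of $R$ and $S_B$-closed right ideals of $F \s A$, the ideal $\mathcal J = F \s A \cap rR$ corresponds to the maximal right ideal $rR$, so $S_2 = F \s A / \mathcal J$ embeds as an $F \s A$-submodule of the simple right $R$-module $R/rR$.

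For the first conclusion I would construct $S_1$ directly. Choose a maximal ideal $\mathfrak m \subset F \s B$ whose $\sigma$-orbit in $\mathrm{Max}(F \s B)$ is infinite — possible because the center-$F$ hypothesis forces $\sigma$ to have infinite order and $F\s B^{\sigma} = F$ — and set $S_1 := F \s A/\mathfrak m F \s A$. As an $F$-space this decomposes as $\bigoplus_{i \in \mathbb Z}(F \s B/\mathfrak m) \bar u^i$, with each $\bar u^i$ an $F \s B$-eigenvector of distinct character $\chi \circ \sigma^i$. Simplicity follows by a standard eigenvector-isolation argument: any nonzero submodule contains some $\bar u^i$ coset (by applying separating elements of $F \s B$), and then contains all of $S_1$ via $\bar u^{\pm 1}$-shifts. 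For $\gk(S_1) = 1$: the module is $F[\bar u^{\pm 1}]$-torsion-free, giving $\ge 1$; and any rank-$2$ subgroup $B' \le A$ intersects $B$ nontrivially by rank comparison, so products of linear polynomials in a nontrivial $\bar b \in B' \cap B$ annihilate any finitely supported element, giving $\le 1$.

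For $S_2$ the elementary parts come from the localization. If $ys \in \mathcal J$ with $s \in S_B$, then $s$ is a unit in $R$ and $rR$ is an $R$-submodule, so $y \in rR \cap F \s A = \mathcal J$ — giving $F \s B$-torsion-freeness. For $\gk(S_2) = n - 1$: torsion-freeness together with $1 \notin \mathcal J$ (as $r$ is a non-unit) yields $\gk(S_2) \ge n - 1$. The matching upper bound exploits the hypothesis on $\gamma$: the unit property of $\beta_t$ supports a top-down division algorithm on $\gamma F \s A$, and the unit property of $\beta_s$ a bottom-up one; jointly, every element of $F \s A$ is congruent modulo $\gamma F \s A$ to one of $\bar u$-support contained in $\{s, \ldots, t - 1\}$. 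Hence $F \s A/\gamma F \s A$, and so $S_2$, is finitely generated over $F \s B$, yielding $\gk(S_2) \le \gk(F \s B) = n - 1$.

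The simplicity of $S_2$ is the main obstacle. Suppose $\mathcal J \subsetneq \mathcal J' \subsetneq F \s A$. Then $\mathcal J' R$ properly contains $\mathcal J R = rR$ (since $\mathcal J$ is $S_B$-closed), so by maximality of $rR$, $\mathcal J' R = R$; clearing denominators produces some $\tau \in \mathcal J' \cap S_B$. The task is to deduce $1 \in \mathcal J'$ from the pair $\gamma, \tau$. I would analyze multiplication by $\tau$ as an endomorphism of the finitely generated, $F \s B$-torsion-free module $S_2$, whose generic fibre $S_2 \otimes_{F \s B} D$ equals $R/rR$; this endomorphism is injective with $F \s B$-torsion cokernel. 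The delicate point, which is the heart of the argument, is that the unit condition on \emph{both} endpoint coefficients $\beta_s$ and $\beta_t$ — and not just one — is precisely what propagates the coprimeness of $\gamma$ and $\tau$ in $R$ back to a Bezout identity $1 \in \gamma F \s A + \tau F \s A \subseteq \mathcal J'$ inside $F \s A$, forcing $\mathcal J' = F \s A$.
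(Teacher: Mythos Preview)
Your treatment of the torsion-freeness and the GK-dimension bounds for $S_2$ is fine and in fact cleaner than the paper's: your observation that $\mathcal J = rR \cap F\ast A$ is $S_B$-closed gives torsion-freeness of $S_2$ immediately, whereas the paper invokes its Lemma~\ref{f.g_ovB_t.f._ovB} for that step as well. Your construction of $S_1$ via a maximal ideal with infinite $\sigma$-orbit is different from the paper's purely existential argument (which quotes \cite{Br2} for a module of GK-dimension~$1$ and then passes to a simple subquotient), and with a little care about the choice of $\mathfrak m$ it can be made to work.

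The genuine gap is in the simplicity of $S_2$. You correctly reduce to producing, for each $\tau\in S_B$, an identity $1\in \tau\,F\ast A + \mathcal J$ (equivalently, $1\in \gamma\,F\ast A + \tau\,F\ast A$), and you label this ``the delicate point, the heart of the argument'' --- but then you do not prove it. The sentence about the two-sided unit condition ``propagating coprimeness back to a Bezout identity'' is not an argument, and your proposed tool (right multiplication by $\tau$ as an $F\ast B$-endomorphism of $S_2$) controls $S_2\tau$, which is only an $F\ast B$-submodule, not the $F\ast A$-submodule $\bar\tau\cdot(F\ast A)$ that you actually need to equal $S_2$. More to the point, your outline for this step never uses the hypothesis that $F\ast A$ has centre $F$, and without that hypothesis the conclusion is false: already when $F\ast A$ is commutative (take $n=2$, $\bar u\bar x=\bar x\bar u$, $r=\bar u-1$) one gets $\mathcal J=(\bar u-1)F\ast A$ and $S_2\cong F\ast B$, which is not simple.

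This is exactly where the paper's proof differs. The paper isolates a separate Lemma~\ref{f.g_ovB_t.f._ovB}: under the centre-$F$ hypothesis, every nonzero $F\ast A$-module that is finitely generated as an $F\ast B$-module is automatically $F\ast B$-torsion-free. The proof of that lemma is where the centre hypothesis is spent --- via Lemma~\ref{AG_reslt} one shows that a nonzero torsion submodule would force a nontrivial central monomial. Once the lemma is available, simplicity is immediate: $M_\tau:=F\ast A/(\tau\,F\ast A+\mathcal J)$ is a quotient of $S_2$, hence finitely generated over $F\ast B$, and the coset of $1$ is killed by $\tau$; so $M_\tau$ must be zero. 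To complete your proof you need either this lemma or some equivalent device that actually consumes the centre-$F$ hypothesis in the $S_2$ argument.
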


We shall first show the following lemma which is used in the proof of Proposition \ref{n_hol_sim_mod_c}.
\begin{lemma}
\label{f.g_ovB_t.f._ovB}
Let $F \s A$ and $F \s B$
be as in Proposition \ref{n_hol_sim_mod_c}.
Then any nonzero finitely generated $F \s A$-module $M$ which is finitely generated as a $F \s B$-module is $F \s B$-torsion-free.
\end{lemma}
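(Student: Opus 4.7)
The plan is to show that the $F \s B$-torsion submodule $T := T_{S_B}(M)$ vanishes. Since $S_B = F \s B \setminus \{0\}$ is an Ore subset of $F \s A$ (as noted in Section 1.1), $T$ is an $F \s A$-submodule of $M$. The hypothesis that $M$ is finitely generated over $F \s B$, which is a commutative Noetherian algebra (being a commutative twisted group algebra of a finitely generated free abelian group, so essentially a Laurent polynomial ring), forces $T$ to be finitely generated over $F \s B$; since it is $F \s B$-torsion, the ideal $I := \ann_{F \s B}(T)$ is nonzero.

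Next I would upgrade $I$ to something that generates a two-sided ideal of $F \s A$. Write $A/B = \langle uB \rangle$ and let $\sigma$ be the automorphism of $F \s B$ defined by $\bar u \beta = \sigma(\beta) \bar u$. Since $F \s B$ is commutative, conjugation by any element of $\overline B$ acts trivially on $F \s B$, so it suffices to show $\sigma(I) = I$. For $\iota \in I$ and $t \in T$, the fact that $T$ is $F \s A$-stable gives $t \bar u \in T$, hence
\[ 0 = (t \bar u)\iota = t\, \sigma(\iota)\, \bar u, \]
and invertibility of $\bar u$ yields $t\sigma(\iota) = 0$; therefore $\sigma(\iota) \in I$, and symmetrically $\sigma^{-1}(I) \subseteq I$. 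Consequently $J := I \cdot F \s A$ coincides with $F \s A \cdot I$ and is a two-sided ideal of $F \s A$.

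To conclude, since $F \s A$ has center exactly $F$, Proposition \ref{el_prop}(vii) tells us $F \s A$ is simple. As $I \ne 0$, we get $J = F \s A$, so $1 \in I \cdot F \s A$. Now use the $F$-space decomposition $F \s A = \bigoplus_{n \in \mathbb Z} (F \s B) \bar u^n$: writing $1 = \sum_n \iota_n \bar u^n$ with $\iota_n \in I$, comparison of $\bar u^0$-coefficients gives $\iota_0 = 1$, so $1 \in I$ and thus $I = F \s B$. This forces $T = 0$, i.e., $M$ is $F \s B$-torsion-free. The only real subtlety is the $\sigma$-invariance step, where the noncommutative twist must be handled carefully; but the invertibility of $\bar u$ makes this routine, and the rest is the simplicity of $F \s A$ combined with the explicit twisted-Laurent decomposition.
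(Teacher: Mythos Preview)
Your argument is correct and is genuinely different from the paper's proof. The paper proceeds via the Gelfand--Kirillov dimension: assuming $T \ne 0$, it observes that $T$ is finitely generated over $F \s B$ (noetherianity), hence has $\gk(T) < \rk(B)$ as an $F \s B$-module; then, choosing a subgroup $C \le B$ of rank $\gk(T)$ over which $T$ is not torsion, it invokes Lemma~\ref{AG_reslt} to produce monomials $\mu_{t-1}\bar v_{t-1}^s$ and $\mu_t \bar u^s$ that commute, and from this (using the commutativity of $F \s B$) extracts a nontrivial monomial in $F \s B$ that commutes with $\bar u$ and is therefore central in $F \s A$, contradicting the hypothesis on the center.

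Your route is shorter and more self-contained: you never touch GK-dimension or Lemma~\ref{AG_reslt}, and you use the center hypothesis only through the simplicity of $F \s A$ (Proposition~\ref{el_prop}(vii)). The key observation that $\ann_{F \s B}(T)$ is $\sigma$-stable, combined with the graded decomposition $F \s A = \bigoplus_{n}(F \s B)\bar u^{\,n}$, immediately promotes a nonzero ideal of $F \s B$ to a nonzero two-sided ideal of $F \s A$, and simplicity finishes the job. The paper's argument, by contrast, exhibits an explicit central monomial and so gives slightly more concrete information about \emph{why} the center hypothesis fails, at the cost of invoking heavier machinery.
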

\begin{proof}
Let $\rk(A) = t$.
Suppose to the contrary that the $F \s  B$-torsion submodule $T$ of $M$ is nonzero. 
Since $F \s B\setminus \{0\}$ is a right Ore subset in $F \s A$, therefore $T$ is an $F \s A$-submodule of $M$.
Since  $F \s B$ is noetherian, the hypothesis in the lemma that $M$ is 
finitely generated as $F \s B$-module implies that $T$ is finitely generated as 
$F \s B$-module. 

It follows by \cite[Lemma 2.7]{BG2} that the GK dimension of
$T$ as $F \s A$-module equals that as $F \s B$-module. 
But $T$ is by definition $F \s B$-torsion and so \[ \gk(T) < \rk(B) = t - 1 \] 
by Definition \ref{def_dim} and \cite[Proposition 2.6]{BG2}.

We shall assume for clarity that $\gk(T) = t - 2$ for our reasoning below is equally valid for all possibilities of $ \gk(T) < t - 1$.
By Definition \ref{def_dim}, there is a subgroup $C \le B$ with rank $t - 2$ so that $T$ is not $F \s C$-torsion and in view of \cite[Lemma 2.6]{BG2},
$C$ may be picked so that $B/C$ is infinite cyclic.
We pick a basis $\{v_1, v_2, \cdots, v_{t - 2} \}$ in $C$. 
Since $B/C \cong \mathbb Z$, this can be extended to a basis $\{v_1, v_2, \cdots, v_{t - 2}, v_{t -1} \}$ of $B$.
By Lemma \ref{AG_reslt}, there are monomials $\mu_{t - 1}, \mu_t \in F \s C$ and an integer $s > 0$ 
so that \[  [\mu_{t - 1}\bar {v}_{t - 1}^s, \mu_t\bar {u}^s] = 1.  \]  
Hence in view of (\ref{c_m_r_1}),
\begin{equation} \label{com_0}
[\mu_{t - 1}\bar {v}_{t - 1}^s, \mu_t][\mu_{t - 1}\bar {v}_{t - 1}^s, \bar u^s] = 1.
\end{equation}
By the hypothesis in the lemma $F \s B$ is commutative and hence $[\mu_{t - 1}\bar {v}_{t - 1}^s, \mu_t] = 1$.
Thus by (\ref{com_0}) we get noting (\ref{c_m_r_1}) -- (\ref{c_m_r_2}) that  
\begin{equation}\label{com_1}
1 = [\mu_{t  - 1}\bar {v}_{t - 1}^s, \bar u^s] = [\mu_{t  - 1}^s\bar {v}_{t - 1}^{s^2}, \bar u] .
\end{equation} 
But (\ref{com_1}) implies that the nontrivial monomial $\mu_{t  - 1}^s\bar {v}_{t - 1}^{s^2}$
is central in $F \s A$ contrary to the assumption in the lemma that $F \s A$ has center $F$.

\end{proof}

\begin{proof}[Proof of Proposition \ref{n_hol_sim_mod_c}]
We shall first show that $S_2$ is simple with $\gk(S_2) = n - 1$.
As noted above, $R$ is a PID.
Thus if $r$ is an irreducible in $R$
then $rR$ is a maximal right ideal in
$R$.

By \cite[Lemma 3.3]{BVO},
$\mathcal J = F \s A \cap rR$
is a maximal right ideal of
$F \s A$
if and only if for each
$\beta \in F \s B \setminus \{0\}$,
\begin{equation}
\label{BVO_reslt}
F \s A = \beta F \s A + \mathcal J.
\end{equation}
We shall show that $S_2 = F \s A/\mathcal J$ is simple by showing that the equality (\ref{BVO_reslt})
is satisfied.
Indeed, by the hypothesis in the theorem, $\mathcal J$
contains a nonzero element $\gamma$ of the form
(\ref{unit_poly}) and so by \cite[Proposition 2.1]{A3},
$S_2 = F \s A/ \mathcal J$ is a
finitely generated
$F \s B$-module. Setting
$\mathcal J_\beta: = \beta (F \s A) + \mathcal J$,
for $\beta \in F \s B \setminus \{0\}$, it follows that
$M_\beta: = F \s A/ \mathcal J_\beta$
is a finitely generated $F \s B$-module.
We note that if $M_\beta \ne 0$,
it has a nonzero element $m$, namely the coset
$1 + \mathcal J_\beta$, such that $m\beta = 0$
for the nonzero $\beta \in F \s B$.
But this is a contradiction in view of Lemma \ref{f.g_ovB_t.f._ovB}.
Hence $M_{\beta} = 0$ and it follows that (\ref{BVO_reslt}) is satisfied and thus
$S_2$ is simple. As already noted above $S_2$ is finitely generated as an
$F \s B$-module and so by Lemma \ref{f.g_ovB_t.f._ovB} is $F\s B$-torsion-free. 
Hence by Definition \ref{def_dim}, $\gk(S_2) \ge n - 1$. But $\gk(S_2) = n$ is impossible for
it implies, noting Definition \ref{def_dim}, that $S_2 \cong F \s A$ and
it is easily seen that $F \s A$ is not a simple $F \s A$-module.

It remains to show that $F \s A$ has a simple module $S_1$ with $\gk(S_1) = 1$.
By \cite[Section 2]{Br2}, $F \s A$ has a finitely generated module $T_1$ with $\gk(T_1) = 1$.
We claim that if $N$ is a finitely generated $F \s A$-module with 
$\gk(N) = 0$ then $N = 0$. Indeed, if $N \ne 0$ then by \cite[Theorem 3]{Br2}, $A$ has a subgroup $A'$ with 
$[A : A'] < \infty$ such that $F \s A'$ is commutative.
It is easily seen, noting (\ref{c_m_r_1}) -- (\ref{c_m_r_2}), that in this case $F \s A$ has center larger than $F$ contrary to the hypothesis in Proposition \ref{n_hol_sim_mod_c}.   

By a reasoning parallel to that in the proof of Proposition \ref{str_hol_fin_len}, it follows that $T_1$ has finite length and so contains a simple submodule $S_1$ with $\gk(S_1) = 1$.
\end{proof}

\begin{example}
Let $t > 0$ be an integer and 
$K = \mathbb Q[u_1^{\pm 1}, \cdots, u_t^{\pm 1}]$ 
be the ordinary Laurent polynomial ring over $\mathbb Q$  
in the $t$ variables $u_1, \cdots, u_n$. 
Let $p_1, p_2, \cdots, p_t$ be distinct primes in 
$\mathbb Z$.  
The skew-Laurent extension 
$T = K[u^{\pm 1}, \sigma]$, where $\sigma$ is the automorphism of $K$ defined by $\sigma(u_i) = p_iu_i$, is a 
quantum Laurent polynomial 
algebra that   
satisfies the hypothesis of 
Proposition 
\ref{n_hol_sim_mod_c}. 
Furthermore, let 
$K^* := K - \{0\}$ and $R$ be the 
right Ore localization of $T$ at 
$K^*$.        
Let $r \in R$ be an 
irreducible element of the 
form 
\[ r = u^k + f_1u^{k - 1} + 
\cdots + f_{k - 1}u + g, \]  
where $k \in \mathbb Z^+, f_1, \cdots, f_{k - 1}, g \in K$.  
and $g$ is a monomial. 
Clearly, $r$ satisfies (\ref{unit_poly}).   
By Proposition 
\ref{n_hol_sim_mod_c}, $T/T \cap rR$ is a simple $T$-module which is torsion-free over $K$.    
\end{example}

This paper is 
mainly extracted from the author's thesis and I wish to express my
sincerest gratitude to Dr. J.~R.~J.~Groves for his supervision
and to the University of Melbourne for financial support.

\end{document}